\newtheorem{theorem}{Theorem}[section]
\theoremstyle{definition}
\newtheorem{definition}[theorem]{Definition}
\newtheorem{corollary}[theorem]{Corollary}
\newtheorem{example}[theorem]{Example}
\theoremstyle{remark}
\numberwithin{equation}{section}
\newcommand{\implication}[2]{\mbox{\enquote{#1$\implies$#2}}}
\newcommand{\RomanNumeralCaps}[1]
{\MakeUppercase{\romannumeral #1}}
\begin{document}

\title[Pluricomplex Green functions on Stein manifolds]{Pluricomplex Green Functons on Stein manifolds \\ and certain linear topological invariants}

%    Information for first author
\author{Ayd\i n Aytuna}
%    Address of record for the research reported here
\address{Institute of Applied Mathematics, Middle East Technical University, 06800 \c Cankaya, Ankara, Turkey}
%    Current address
\curraddr{Institute of Applied Mathematics, Middle East Technical University, 06800 \c Cankaya, Ankara, Turkey}
\email{aytuna@metu.edu.tr}
%    \thanks will become a 1st page footnote.
%\thanks{The author sincerely thanks E. Poletsky for his valuable comments.}

%    General info
\subjclass[2020]{Primary 32A70, 32U35; Secondary 46E10}

\date{June 06, 2022.}

\dedicatory{Sunar'a}

\keywords{Pluricomplex Green functions, Fr\'echet spaces of analytic functions, diametral dimension, $\widetilde{\Omega}$ condition, semi-proper negative plurisubharmonic functions}

\begin{abstract}
In this paper we explore the existence of pluricomplex Green functions for
Stein manifolds from a functional analysis point of view. For a Stein
manifold $M$, we will denote by $O\left( M\right) $ the Fr\'{e}chet space of
analytic functions on $M$ equipped with the topology of uniform convergence
on compact subsets. In the first section, we examine the relationship
between existence of pluricomplex Green functions and the diametral
dimension of  $O\left( M\right) .$ This led us to consider negative
plurisubharmonic functions on $M$ with a nontrivial relatively compact
sublevel set (semi-proper). In section 2, we characterize Stein manifolds
possessing a semi-proper negative plurisubharmonic function through a local,
controlled approximation type condition, which can be considered as a local
version of the linear topological invariant $\widetilde{\Omega }$ of D. Vogt.
In Section 3 we look into pluri-Greenian and locally uniformly pluri-Greenian
complex manifolds introduced by E. Poletsky. We show that a complex manifold
is locally uniformly pluri-Greenian if and only if it is pluri-Greenian and
give a characterization of locally uniformly pluri-Greenian Stein manifolds
in terms of the notions introduced in Section 2.    
\end{abstract}
\maketitle

\section*{Introduction}\label{intro}
Spaces of analytic functions form an essential class of nuclear Fr\'{e}chet spaces. In recent years there have been significant advances in the structure theory of nuclear Fr\'{e}chet spaces  (see \cite{MV}). In attempts to utilise this theory in complex analysis,
one is led to analyse the complex analytic properties shared by Stein manifolds whose
analytic function spaces possess a common linear topological invariant.

In this note, we look into the connection between the existence of
pluricomplex Green functions and certain linear topological properties of
the Fr\'{e}chet space of analytic functions on a Stein manifold; a topic
that was touched upon in \cite{Ay}.

Throughout the note we will denote the space of analytic functions on a
Stein manifold $M$ with compact open topology by $\mathcal{O}(M)$.

After compiling some background material, in section \ref{section1} we
recall a result of \cite{Ay} that relates the existence of pluricomplex Green functions on $M$ to the diametral dimension of $\mathcal{O}(M)$ and look at an example.

In section \ref{section2}, we examine semi-proper pluricomplex Green
functions (see definiton below) and give a characterization of their
existence in terms of a controlled approximation property for analytic
functions on the complex manifold.

We then relate the existence of semi-proper pluricomplex Green functions to
the linear topological properties $\widetilde{\Omega}$ of Vogt \cite{VB}, and the
diametral dimension for $\mathcal{O}(M)$.

In section \ref{section3} we look into pluri-Greenian and locally uniformly pluri-Greenian
complex manifolds introduced by E. Poletsky \cite{P paper}, \cite{P yeni}. We show that a complex manifold is locally uniformly
pluri-Greenian if and only if it is pluri-Greenian and give a
characterization of locally uniformly pluri-Greenian Stein manifolds in
terms of the notions introduced in section 2. 

In the last section, we recall some research done for the existence of
pluricomplex Green functions, with the intention of
putting our results in perspective.

The manifolds considered in this note are always assumed to be connected. We
will use the standard terminology and results from functional analysis and
complex potential theory as presented in \cite{MV,K}, respectively.

\section{Pluricomplex Green functions on a Stein manifold $M$ and the diametral dimension of $\mathcal{O}(M)$}\label{section1}

We start by defining the terms appearing in the heading. 

Let $\mathcal{X}$ be a Fr\'echet space and $\lbrace \parallel\cdot\parallel_k\rbrace^\infty_k$ an increasing system of semi-norms generating the topology of $\mathcal{X}$. Let $\mathcal{U}_k:=\lbrace x\in \mathcal{X}:\parallel x\parallel_k\leq 1\rbrace$ denote the unit ball of the corresponding local Banach spaces $\mathcal{X}_k$, $k=1,2,\cdots$. For a subspace $L$ of $\mathcal{X}$ and for $k^+>k$, let;
$$\delta(\mathcal{U}_{k^+};\mathcal{U}_k,L):= \inf\lbrace t>0: \mathcal{U}_{k^+}\subseteq t\mathcal{U}_k+L\rbrace.$$
The \emph{$n$-th Kolmogorov diameter} of $\mathcal{U}_{k^+}$ with respect to $\mathcal{U}_k$ is defined as:
$$d_n(\mathcal{U}_{k^+},\mathcal{U}_k):= \inf\lbrace \delta(\mathcal{U}_{k^+};\mathcal{U}_k,L): L \textrm{ subspace of $\mathcal{X}$ of dimension at most } n, n\in \mathbb{Z}^+\rbrace.$$
There is a huge literature on techniques of computing Kolmogorov diameters for specific Banach spaces (see \cite{Pinkus}).

The \emph{diametral dimension} of $\mathcal{X}$ is defined as:
$$\Delta (\mathcal{X}):= \lbrace (t_n)_n\in \mathbb{C}^{\mathbb{N}};\, \forall\, k, \, \exists\, k^+>k; \, t_nd_n(\mathcal{U}_{k^+},\mathcal{U}_k)\to 0\rbrace.$$
The diametral dimension of $\mathcal{X}$ does not depend upon the generating semi-norm system and is a linear topological invariant of the Fr\'echet space $\mathcal{X}$ in the sense that if $\mathcal{X}$ is linear topologically isomorphic to a Fr\'echet space $\mathcal{Y}$, then $\Delta(\mathcal{X})=\Delta(\mathcal{Y})$. 

In the case of nuclear Fr\'echet spaces, one can choose a generating system of Hilbertian semi-norms $\lbrace \parallel\cdot\parallel_k\rbrace_k$ for $\mathcal{X}$ and represent the compact linking maps ${i^k_{k^+}:\mathcal{X}_{k^+}\to \mathcal{X}_k}$ as:
$$i_{k^+}^k:= \sum_n\lambda_n \langle \cdot,f_n\rangle_{k^+} e_n,$$
where $\lbrace f_n\rbrace$ and $\lbrace e_n\rbrace$ are orthonormal sequences in $\mathcal{X}_{k^+}$ and $\mathcal{X}_k$, respectively and $(\lambda_n)$ is a sequence of nonnegative numbers converging to $0$.

In this framework, the sequence of Kolmogorov diameters $\lbrace d_n(\mathcal{U}_{k^+},\mathcal{U}_k)\rbrace$ coincides with the non-increasing rearrangement of the sequence $\lbrace \lambda_n\rbrace_n$ of the eigenvalues. 

This information is quite useful in computing diametral dimensions. For example, in view of this observation, one can readily verify that 
\begin{align*}
	\Delta(\mathcal{O}(\Delta^d))&= \lbrace (\xi_n): \sum_n|\xi_n|^2e^{-\frac{2}{k}n^{1/d}}<\infty, \,\forall\, k=1,2,\cdots \rbrace\\
	\Delta (\mathcal{O}(\mathbb{C}^d))&= \lbrace (\xi_n): \,\exists\, R>1; \, |\xi_n|\leq CR^{n^{1/d}}\rbrace, 
\end{align*}
where $\Delta^d$ is the unit polydisc in the $d$ dimensional complex plane $\mathbb{C}^d$, $d=1,2,\cdots$. 

For more information about diametral dimension, we refer the reader to \cite{Pietsch,Rolewicz}. 

We now recall some background material from Complex Potential Theory.

Let $M$ be a connected complex manifold and $z_0\in M$. Let us denote plurisubharmonic functions on an open $\Omega\subseteq M$ by the symbol $PSH(\Omega)$. The \emph{pluricomplex Green function} $g_M(\cdot;z_0)$ with a pole at $w_0$ is defined as:
$$g_M(z;z_0):=\sup\lbrace u\in PSH(M):u<0\,\textrm{on}\,M, \exists\, c:\, u(z)\leq\ln|z-w_0|+c, z\in U_{z_0}\rbrace,$$
where $U_{z_0}$ is some local neighborhood near $z_0$ (see \cite{K}).

It turns out that either $g_M$ is a plurisubharmonic function that is maximal on $M$ or is $-\infty$. We will say that pluricomplex Green function with a pole at $z_0$ \emph{exists} in case $g_M(\cdot, z_0)\not\equiv-\infty$.

Pluricomplex Green functions are natural analogues in several variables, of the classical Green functions in the theory of open Riemann surfaces and were studied in diverse settings (see \cite{K,Demailly,P paper}).

Our first result, connecting pluricomplex Green functions and diametral dimension comes basically from \cite{Ay}. Indeed Corollary 4.3, Theorem 2.9 and Proposition 4.1 of \cite{Ay} combined yields:

\begin{theorem}\label{thm1}
	Let $M$ be a Stein manifold of dimension $d$, and suppose that $\Delta(\mathcal{O}(M))=\Delta(\mathcal{O}(\Delta^d))$. Then pluricomplex Green function exists for each point of $M$.
\end{theorem}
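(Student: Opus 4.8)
The plan is to reduce the theorem, for an arbitrary fixed $z_0\in M$, to the construction of a single nontrivial competitor in the envelope defining $g_M(\cdot\,;z_0)$. By the definition recalled above, $g_M(\cdot\,;z_0)\not\equiv-\infty$ as soon as there exists one $u\in PSH(M)$ with $u<0$ on $M$ and $u(z)\le \ln\abs{z-z_0}+c$ on a neighborhood $U_{z_0}$. Hence it suffices to manufacture a negative, not identically $-\infty$, global plurisubharmonic function carrying a logarithmic singularity at $z_0$. To feed the hypothesis in, I would fix an exhaustion $K_1\subset\subset K_2\subset\subset\cdots$ of $M$ by $\mathcal{O}(M)$-convex compacta and take the sup-norms $\|f\|_k=\sup_{K_k}\abs{f}$ (or their Hilbertian $L^2$ realisations) as a generating increasing seminorm system. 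Using nuclearity of $\mathcal{O}(M)$ and the eigenvalue representation of the linking maps recalled above, the Kolmogorov diameters $d_n(\mathcal{U}_{k^+},\mathcal{U}_k)$ become the non-increasing rearrangement of the $(\lambda_n)$. Comparing $\Delta(\mathcal{O}(M))$ with the explicit description of $\Delta(\mathcal{O}(\Delta^d))$ displayed above then pins the asymptotic class of these diameters: for each $k$ there is $k^+$ with $-\ln d_n(\mathcal{U}_{k^+},\mathcal{U}_k)$ of \emph{finite} type $n^{1/d}$, as opposed to the infinite-type behaviour that singles out $\Delta(\mathcal{O}(\mathbb{C}^d))$.

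Next I would interpret this decay in potential-theoretic terms. For a condenser $K\subset\subset\Omega$ in a Stein manifold, Zahariuta's theory governs the asymptotics of the diameters of the restriction $H^{\infty}(\Omega)\hookrightarrow H^{\infty}(K)$ through the Monge--Amp\`ere capacity of $(K,\Omega)$, equivalently through the relative extremal (pluricomplex Green) function of the pair. The finite-type rate forced in the previous step says precisely that this capacity is strictly positive and finite. This is incompatible with parabolicity and forces $M$ to carry nonconstant bounded-above plurisubharmonic functions; more sharply, it produces a negative plurisubharmonic function $\varphi$ on $M$ that is \emph{semi-proper}, i.e.\ has a nontrivial relatively compact sublevel set.

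Finally, starting from such a $\varphi$, I would localise at $z_0$. On a small coordinate ball $B\ni z_0$ I take the model $\ln\abs{z-z_0}$ and glue $\varepsilon(\ln\abs{z-z_0}-C)$ to $\varphi$ by a regularised maximum, choosing $\varepsilon,C>0$ so that the glued function coincides with $\varphi$ outside $B$, stays strictly negative throughout, and retains the logarithmic pole at $z_0$. Since $\varphi\not\equiv-\infty$, this yields a legitimate nontrivial member of the family defining $g_M(\cdot\,;z_0)$, so $g_M(\cdot\,;z_0)\not\equiv-\infty$; as $z_0$ was arbitrary, the Green function exists at every point of $M$.

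The step I expect to be the main obstacle is the middle one: making rigorous, on a \emph{general} Stein manifold, the dictionary between the purely linear-topological decay of $\{d_n\}$ and potential-theoretic nondegeneracy, so that matching the diametral dimension of the polydisc (and not merely obtaining some subexponential decay) genuinely delivers a semi-proper negative plurisubharmonic function rather than an abstract capacity estimate. Controlling the gluing in the last step so that both global negativity and the logarithmic singularity survive is a secondary technicality, but the heart of the proof is the equivalence between the finite-type eigenvalue asymptotics and the existence of the required plurisubharmonic functions.
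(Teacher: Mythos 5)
You should first note that the paper does not actually prove Theorem~\ref{thm1}: it is imported wholesale from \cite{Ay} (Corollary 4.3, Theorem 2.9 and Proposition 4.1 there), so you are reconstructing an external argument. As a reconstruction, your middle step contains the decisive gap: you assert that $\Delta(\mathcal{O}(M))=\Delta(\mathcal{O}(\Delta^d))$ produces a \emph{semi-proper} negative plurisubharmonic function. Nothing in your sketch substantiates this, and it is strictly stronger than what the paper itself establishes: Section 2 proves only the converse implication (Theorem~\ref{thm3}: semi-proper $\Rightarrow$ $\Delta(\mathcal{O}(M))=\Delta(\mathcal{O}(\Delta^d))$), the chain of Corollary~\ref{cor5} runs semi-proper $\Rightarrow$ diametral dimension $\Rightarrow$ Green functions, and the equivalence of all three is recorded only for Riemann surfaces (Corollary~\ref{cor4}). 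If your claim held in general, Theorems~\ref{thm2} and~\ref{thm3} would merge into a single characterization that the author conspicuously does not assert. The Zahariuta-type dictionary you invoke also does not deliver it: the width asymptotics for a condenser $K\subset\subset\Omega$ presuppose that the relative extremal function of the pair in $M$ is nontrivial, which is essentially the conclusion sought. Even the weaker extraction of a nonconstant negative plurisubharmonic function needs care with quantifiers: $(e^{cn^{1/d}})_n\notin\Delta(\mathcal{O}(M))$ only gives, for each $c$, some $k=k(c)$ for which $d_n(\mathcal{U}_{k^+},\mathcal{U}_k)$ fails to be $o(e^{-cn^{1/d}})$ for every $k^+$; a uniform lower bound requires the dichotomy machinery of \cite{Ay}.

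Your final gluing also fails as written. Any competitor for $g_M(\cdot;z_0)$ must satisfy $u\le\ln\abs{z-z_0}+c$ near $z_0$; a function of the form $\max(\varphi,\varepsilon(\ln\abs{z-z_0}-C))$ dominates $\varphi$, so this upper bound forces $\varphi(z_0)=-\infty$ with at least logarithmic rate --- which you cannot arrange for an arbitrary $z_0$. In other words, taking a maximum near the pole erases the singularity rather than retaining it, and independently a pole of weight $\varepsilon<1$ is not admissible for the definition of $g_M$. Matching with $\varphi$ outside $B$ is a further obstruction if $\varphi$ is unbounded below on $\partial B$. The gluing that does work --- the paper's proof of \implication{(1)}{(2)} in Theorem~\ref{thm2} --- keeps the Green function $g_D$ of a relatively compact strictly pseudoconvex $D\ni z_0$ untouched near the pole and splices it with an affine rescaling of the global function only on an annulus $\{-c_1<g_D<-c_2\}$ away from $z_0$; it is precisely semi-properness that makes the two pieces comparable on the overlap. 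So both ends of your argument lean on semi-properness, which is the one thing the hypothesis is not known to provide; to prove Theorem~\ref{thm1} one must either supply the missing converse of Theorem~\ref{thm3} or argue, as in \cite{Ay}, directly from the triviality of some $g_M(\cdot;z_0)$ to $\Delta(\mathcal{O}(M))=\Delta(\mathcal{O}(\mathbb{C}^d))$.
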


However the statement of the above theorem is not an if and only if statement as the example below reveals.

\begin{example}
	Let $M=\Delta\times \mathbb{C}\subseteq \mathbb{C}^2$. Clearly, pluricomplex Green function exists for each point $\zeta_0=(z_0,w_0)$ of $M$, indeed the function:
	$$g((z,w);\zeta_0)=\ln\big|\frac{z-z_0}{1-z\overline{z_0}}\big|$$
	is the pluricomplex Green function for $M$ with a pole at $\zeta_0$. 
	
	We now examine the diametral dimension of $\mathcal{O}(M)$. 
	
	The nuclear Fr\'echet space $\mathcal{O}(M)$ admits a Schauder basis $\lbrace \boldsymbol{z^n}\boldsymbol{w^m}\rbrace$, where $${\boldsymbol{z^n}(z,w)=z^n}, \quad {\boldsymbol{w^m}(z,w)=w^m}, \, n,m=0,1,\cdots.$$
	
	We choose the exhaustion $D_k=\lbrace (z,w)\in M: |z|\leq e^{-1/k},\, |w|\leq e^k\rbrace$ of $M$, $k=1,2\cdots$, and set $h_k(\theta):= (1-\theta)k -\frac{\theta}{k}$, for $0\leq \theta\leq 1$, $k=1,2,\cdots$. 
	
	Using the $H^2(D_k)$ norms, $k=1,2,\cdots$, to generate the topology of $\mathcal{O}(M)$, we obtain a sequence space representation of $\mathcal{O}(M)$ of the form:
	\begin{eqnarray*}
		\mathcal{O}(M)&\cong &\lbrace (\xi_{n,m})_{n,m=0}^\infty: |(\xi_{n,m})|_k\\
		& &\quad =\big(\sum_{\vec{\eta} =(n,m)} |\xi_{n,m}|^2e^{2|\vec{\eta}|h_k(\frac{n}{n+m})}\big)^{1/2}<\infty,\,\forall\, k=1,2,\cdots \rbrace,
	\end{eqnarray*}
	where, as usual, we used the notation $|\vec{\eta}|:=n+m$ for $\vec{\eta}=(n,m)\in \mathbb{N}\times \mathbb{N}$. 
	
	The corresponding fundamental system of neighborhoods are:
	$$\mathcal{U}_k=\lbrace f=\sum_{n,m}\xi_{n,m}\boldsymbol{z^nw^m}: |(\xi_{n,m})|_k\leq 1,\, k=1,2,\cdots \rbrace.$$
	Fix a $k\in \mathbb{N}$, $k^+ \gg 2k$ and $\varepsilon$, $0<\varepsilon <1$, whose values we will specify later. We will use the usual lexicographical order for $\mathbb{N}\times\mathbb{N}$, and set $\theta(\vec{\eta}):=\frac{n}{n+m}$ for $\overline{\eta}=(n,m)$, $n,m=0,1,\cdots$.
	
	As was mentioned in the introduction, the sequence of Kolmogorov diameters $\lbrace d_0(\mathcal{U}_{k^+},\mathcal{U}_k),\cdots,d_n(\mathcal{U}_{k^+},\mathcal{U}_k),\cdots\rbrace$ coincides with the non-increasing rearrangement of $$\lbrace e^{2|\vec{\eta}|(h_k(\theta(\vec{\eta})-h_{k^+}(\theta(\vec{\eta}))}\rbrace_{\vec{\eta}\in\mathbb{N}\times\mathbb{N}}.$$
	
	Evidently, for a fixed $j$;
	\begin{equation}\label{star1}
	j<j+1\leq \boldsymbol{\#}\lbrace \vec{\eta}:|\vec{\eta}|(h_{k^+}(\theta(\vec{\eta}))-h_k(\theta(\vec{\eta})))\leq \varepsilon_j(k^+;k)\rbrace, 
	\end{equation}
	where $\#(A)$ denotes the number of elements of the set $A$, and $\varepsilon_j(k^+;k)=-\ln d_j(\mathcal{U}_{k^+}; \mathcal{U}_k)$, $j=0,1,\cdots$. 
	
	On the lattice points $\vec{\eta}$; $\theta(\vec{\eta})\leq 1-\varepsilon$, since $h_{k^+}(\theta(\vec{\eta}))-h_k(\theta(\vec{\eta}))\geq \varepsilon (k^+-k)$, we have:
	\begin{eqnarray*}
	\lefteqn{\#\{\vec{\eta}: \theta(\vec{\eta})\leq 1-\varepsilon,\, |\vec{\eta}|(h_{k^+}(\theta(\vec{\eta}))-h_k(\theta(\vec{\eta})))\leq \varepsilon_j(k^+;k)\}}\\
		&\leq & \# \{\vec{\eta}:|\vec{\eta}|\leq \frac{1}{\varepsilon (k^+-k)}\varepsilon_j(k^+;k)\}\\
		&\leq & \frac{1}{2} \{\big(\frac{1}{\varepsilon(k^+-k)}\varepsilon_j(k^+;k)\big)^2+\frac{1}{\varepsilon(k^+-k)}\varepsilon_j(k^+;k)\}\\
		&\leq &K\varepsilon_j(k^+;k)^2,
	\end{eqnarray*}

	where $K=\frac{1}{2}\big(\frac{1}{\varepsilon^2(k^+-k)}+\frac{1}{\varepsilon(k^+-k)}\big)$.

	On the remaining lattice points $\vec{\eta}$; $\theta(\vec{\eta})>1-\varepsilon$, since $k^+\gg 2k$, we estimate:
	\begin{eqnarray*}
	\lefteqn{\#\{\vec{\eta}:\theta(\vec{\eta})>1-\varepsilon, |\vec{\eta}|(h_{k^+}(\theta(\vec{\eta}))-h_k(\theta(\vec{\eta})))\leq \varepsilon_j(k^+;k)\}}\\
		&\leq& \#\{\vec{\eta}:\theta(\vec{\eta})>1-\varepsilon ; |\vec{\eta}|\leq 2k\varepsilon_j(k^+;k)\}.	
	\end{eqnarray*}
	A crude estimate on the number of lattice points in the wedge with base an interval in the unit simplex starting from $(0,1)$ with length $\sqrt{2}\varepsilon $ and with norm less than or equal to $2k\varepsilon_j(k^+;k)$ yields:
	$$\#\{\vec{\eta}: \theta(\vec{\eta})>1-\varepsilon ;|\vec{\eta}|\leq 2k \varepsilon_j(k^+;k)\}\leq 1+\frac{\varepsilon}{2}(4k^2+2k):=K'.$$
	Now choose $\varepsilon$ so that $K'\leq 2$ and $k^+$ so that $K<1$. It follows from these choices and inequality (\ref{star1}) that
	$$j\leq 3 \varepsilon_j(k^+;k)^2=3\ln \big(\frac{1}{d_j(k^+,k)}\big)^2,\quad \textrm{or}$$
	\begin{equation}\label{star2}
	d_j(\mathcal{U}_{k^+},\mathcal{U}_k)\big(e^{\frac{1}{\sqrt{3}}\sqrt{j}}\big)_j\leq 1.
	\end{equation}
	Since $j$ was arbitrary, the nuclearity of $\mathcal{O}(M)$ implies that $\big(e^{\frac{1}{\sqrt{3}}{\sqrt{j}}}\big)_j\in \Delta(\mathcal{O}(M))$ (see \cite{Frerick et al}). On the other hand, since $\Delta(\mathcal{O}(\Delta^2))=\{(\xi_j):\sum_j|\xi_j|^2e^{-\frac{2\sqrt{j}}{k}}<\infty, \, \forall\, k=1,2,\cdots\}$, $\big(e^{\frac{1}{\sqrt{3}}{\sqrt{j}}}\big)_j\notin \Delta(\mathcal{O}(\Delta^2))$.
\end{example}
For more information about the diametral dimension of spaces of analytic functions on complete Reinhardt domains in $\mathbb{C}^n$, we refer the reader to \cite{AKT manuscripta,AT Decomposition}.
\section{Semi-proper pluricomplex Green functions}\label{section2}

The sublevel sets of the pluricomplex Green functions of the above example are far from being relatively compact. Moreover the poles of these
	pluricomplex Green functions are not strict in the sense of Poletsky. Recall
	that a function $g$ on a complex manifold is said to have a \textit{%
		strict} \textit{logarithmic} \textit{pole} at a point $w$, in 
case there are a coordinate neighborhood $U$ and constants 
$c_{1}$ and $c_{2}$ such that  
\begin{equation*}
\ln \left\Vert z-w\right\Vert +c_{1}\leq g\left( w\right) \leq \ln
\left\Vert z-w\right\Vert +c_{2}
\end{equation*}%
on $U$ (see \cite{P paper}). One can ask
as to whether these features account for the difference; $%
\Delta (\mathcal{O}(M))\neq \Delta (\mathcal{O}(\Delta ^{2}))$ in the
example above.

In what follows, we will call an extended real valued function $f$ on a topological space $X$ \emph{semi-proper} in case there exists a $c\in \mathbb{R}$ such that the sublevel set $\emptyset\neq \{x\in X:f(x)<c\}$ is relatively compact in $X$.

To answer the question posed above we will enquire into the existence of semi-proper pluricomplex Green functions. 
\begin{definition}\label{defn1}
	Let $M$ be a Stein manifold and $\{\parallel\cdot\parallel_k\}_k$ be a generating norm system for $\mathcal{O}(M)$. For $z\in M$ we will say that $M$ has the property $\widetilde{\Omega}_z$ in case there exists a ball with radius $\varepsilon>0$ in some coordinate system around $z$, $B(z;\varepsilon)$, $B(z;\varepsilon)\subset\subset M$ such that
	$$\exists\, k_1,\lambda>0;\,\forall\, k>k_1, \exists\, C>0; \quad \mathcal{U}_{k_1}\subseteq \frac{C}{r}\mathcal{U}_0+r^\lambda \mathcal{U}_k,\, \forall\, r>0,$$
	where $\mathcal{U}_{0}:=\{f\in \mathcal{O}(M):\sup_{w\in B(z;\varepsilon)}|f(w)|\leq 1\}$. 
\end{definition}

Clearly, this condition does not depend on the generating norm system of $\mathcal{O}(M)$. 

Now we can state the main theorem of this section.

\begin{theorem}\label{thm2}
	Let $M$ be a Stein manifold and $\zeta_0\in M$. The following statements are equivalent:
	\begin{enumerate}
		\item There exists a negative plurisubharmonic function on $M$ that has a relatively compact sublevel set containing $\zeta_0$. 
		\item $M$ possesses a semi-proper pluricomplex Green function with a strict logarithmic pole at $\zeta_0$.
		\item $M$ has the property $\widetilde{\Omega}_{\zeta_0}$.
	\end{enumerate}
\end{theorem}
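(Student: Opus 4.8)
The plan is to prove the cycle \implication{2}{1}, \implication{1}{3}, \implication{3}{2}; the first implication is soft potential theory, while the other two carry the analytic content and link the complex-analytic side to the functional-analytic condition $\widetilde{\Omega}_{\zeta_0}$. For \implication{2}{1}, suppose $g:=g_M(\cdot;\zeta_0)\not\equiv-\infty$. Then $g$ is plurisubharmonic and $g<0$ on $M$, so $g$ is itself a candidate for statement (1). The strict logarithmic pole forces $g(\zeta_0)=-\infty$, whence $\zeta_0\in\{g<c\}$ for every $c\in\mathbb{R}$; taking the value $c$ furnished by semi-properness yields a nonempty, relatively compact sublevel set containing $\zeta_0$.

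For \implication{1}{3}, I would fix a coordinate ball $B=B(\zeta_0;\varepsilon)\subset\subset\Omega:=\{u<c\}$ and generate the topology of $\mathcal{O}(M)$ by sup-norms on an exhaustion $(K_k)$ with $\overline{B}\subset K_0$. Given $f$ with $\|f\|_{k_1}\le1$ and a parameter $r>0$, the goal is a splitting $f=g+h$ with $\sup_B|g|\le C/r$ and $\|h\|_k\le r^\lambda$. I would obtain it from H\"ormander's $L^2$-estimates for $\bar\partial$, using a weight that superposes the singular term $2\ln\|z-\zeta_0\|$, localized near $\zeta_0$, with a large multiple of the negative function $u$; relative compactness of $\Omega$ is exactly what separates the behaviour on $B$ from the growth measured by $\|\cdot\|_k$, and the free parameter $r$ enters as the size of that multiple. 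Cutting $f$ off near $\zeta_0$, solving the resulting $\bar\partial$-equation, and bookkeeping the constants should produce the decomposition, the polynomial factor $r^\lambda$ reflecting the comparison of the two plurisubharmonic weights.

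For \implication{3}{2}, which I expect to be the main obstacle, I would read $\widetilde{\Omega}_{\zeta_0}$ as the functional-analytic shadow of the negative plurisubharmonic function we are after. Dualizing the inclusion $\mathcal{U}_{k_1}\subseteq\frac{C}{r}\mathcal{U}_0+r^\lambda\mathcal{U}_k$ through the bipolar theorem converts it, for all $k$, into a comparison between the extremal function attached to $\mathcal{U}_0$ (the sup-norm on $B$, which carries the logarithmic pole) and the weights $\|\cdot\|_k$ (growth on $K_k$). Taking the supremum over the resulting family and passing to the upper semicontinuous regularization should yield a negative plurisubharmonic $w$ with $w(z)\le\ln\|z-\zeta_0\|+C$ near $\zeta_0$ and $w\ge t_0$ off a compact set. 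The first property gives $g_M\ge w\not\equiv-\infty$; the second, via $\{g_M<t_0\}\subseteq\{w<t_0\}$, makes $g_M$ semi-proper. Finally, monotonicity of pluricomplex Green functions under inclusion of domains gives $g_M(\cdot;\zeta_0)\le g_B(\cdot;\zeta_0)=\ln(\|z-\zeta_0\|/\varepsilon)$ on $B$, which together with the lower bound $g_M\ge w$ pins down the strict logarithmic pole.

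The genuine difficulty throughout is that a strict logarithmic pole cannot be produced by the familiar device of taking a maximum of a local $\ln\|z-\zeta_0\|$ with a global negative function, since a maximum never preserves the value $-\infty$ of the singular piece at its own pole; the pole must instead be carried by a genuinely global object such as $\ln\|F\|$ for a holomorphic embedding $F$, which is however an exhaustion and therefore unbounded above. Reconciling the three requirements, namely a pole, negativity, and a relatively compact sublevel set, is precisely what compels the use of the quantitative estimate (3) and the $L^2$-machinery rather than a purely qualitative gluing argument.
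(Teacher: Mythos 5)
Your choice of cycle is legitimate, and two of its three legs are fine: \implication{(2)}{(1)} is immediate as you say, and \implication{(1)}{(3)} is essentially the argument the paper itself uses (the paper runs it from (2), but its weight $\rho_t$ only uses the sublevel structure of the semi-proper function, not the pole, and its additive Cousin-type decomposition with bounds plays the role of your cutoff-plus-$\bar\partial$ step). The genuine gap is in \implication{(3)}{(2)}, in two places. First, the construction of $w$. Dualizing $\mathcal{U}_{k_1}\subseteq\frac{C}{r}\mathcal{U}_0+r^\lambda\mathcal{U}_k$ and optimizing over $r$ gives, for each $k$, an inequality $\|\delta_z\|^*_{k_1}\leq C_k(\|\delta_z\|^*_0)^{\gamma}(\|\delta_z\|^*_k)^{1-\gamma}$; the functions $\ln\|\delta_z\|^*_j$ are plurisubharmonic but none of them is negative, and no supremum-plus-regularization of this family produces a \emph{negative} function with a \emph{relatively compact} sublevel set. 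The whole difficulty is to convert the quantifier ``for all $k$'' into a single bounded set: the paper does this via the nuclearity argument of 29.16 in Meise--Vogt (producing a bounded Hilbert ball $H_\infty$ with $\|g\|^*_{k_1}\leq c(\|g\|^*_0)^\gamma(\|g\|^*_B)^{1-\gamma}$), then takes the eigenfunctions $f_n$ of the compact embedding $H_\infty\hookrightarrow L^2(B)$, sets $\psi=\overline{\lim}_\xi\overline{\lim}_n\ln|f_n(\xi)|/\varepsilon_n$ with $\varepsilon_n=-\ln\lambda_n$, and proves relative compactness of $\{\psi<-\gamma^+\}$ by a Hartogs-lemma and $f^n$-power argument. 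None of this is visible in your sketch, and without it the sublevel set of your $w$ has no reason to be relatively compact. Moreover $\widetilde{\Omega}_{\zeta_0}$ is formulated with a ball $B(\zeta_0,\varepsilon)$, not a point mass, so no logarithmic singularity $w\leq\ln\|z-\zeta_0\|+C$ can fall out of the dualization.

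Second, even granting such a $w$, your derivation of the strict logarithmic pole fails: from $g_M\geq w$ with $w\leq\ln\|z-\zeta_0\|+C$ you get no lower bound of the form $g_M\geq\ln\|z-\zeta_0\|+c_1$, which is half of what a strict pole requires. Your closing paragraph dismisses the gluing device on the grounds that a maximum destroys the $-\infty$ at the pole, but that misreads how the gluing is done: one takes $\max(g_D,\psi)$ only on an annular region where $\psi\leq g_D$ on the inner boundary, and sets the function equal to $g_D$ outright on the inner region, so near $\zeta_0$ the glued function \emph{is} $g_D$. Since $D$ is a relatively compact smooth strictly pseudoconvex (hence hyperconvex) domain, $g_D$ has a strict logarithmic pole by Demailly/Poletsky, the glued function is a global negative competitor equal to $g_D$ near $\zeta_0$, and together with the monotonicity bound $g_M\leq g_D$ this squeezes $g_M$ between $\ln\|z-\zeta_0\|+c_1$ and $\ln\|z-\zeta_0\|+c_2$. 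That gluing step (the paper's \implication{(1)}{(2)}) is exactly what your cycle is missing, and your proposed substitute via $\ln\|F\|$ for an embedding $F$ does not address negativity or semi-properness.
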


\begin{proof}
\implication{(1)}{(2)}	The proof of this implication is a gluing argument (see e.g. \cite{A D-issue}). We give it in detail since we will utilize the explicit form of some of the functions appearing in the proof in a later discussion.

Let $\sigma_1$ be a negative plurisubharmonic function given in the assumption and assume that $\{z:\sigma_1(z)<-c\}$ is relatively compact for some $c>0$, and contains $\zeta_0$. Choose a ball $B(\zeta_0,\varepsilon)$ in a coordinate neighborhood of $\zeta_0$ such that $$\overline{B(\zeta_0,\varepsilon)}\subset \{z:\sigma_1(z)<-c\}$$ and set $\sigma_2(z):=\sigma_1+\overline{c}$, where $-\overline{c} :=\max_{\xi\in \overline{B}(\zeta_0,\varepsilon)}\sigma_1(\xi)$. 

Fix a relatively compact, smooth strictly pseudoconvex domain $D\subset\subset M$ containing $\overline{\{z:\sigma_2(z)<\beta\}}$, $\beta:=\overline{c}-c$, and let $g_D$ denote the pluricomplex Green function on $D$ with a pole at $\zeta_0$. The function $g_D$ is continuous on $D$ and is a proper exhaustion on $D$ (see Theorem 4.3 of \cite{Demailly}, and Theorem 4.3 of \cite{P paper}). 

Choose $c_1>0$, $c_2>0$ so that;
\begin{eqnarray*}
\{z:g_D(z)<-c_1\}&\subset& B(\zeta_0,\varepsilon)\subset\subset \{z:\sigma_2(z)<\beta\}\\
&\subset\subset &\{z\in D:g_D(z)<-c_2\}\subset\subset D\subset\subset M.
\end{eqnarray*}

Set $\psi(z):= \big(\frac{c_1-c_2}{\beta}\big)\sigma_2-c_1$, and let
$$U:=\{z\in D:g_D(z)<-c_2\},\quad V:=\overline{\{z\in D:g_D<-c_1\}}^c\cap \{z\in D: g_D<-c_2\}.$$
For any $\xi\in \partial V\cap U$, $\overline{\lim}_{z\to \xi}\psi(z)\leq g_D(\xi)$ by construction. Hence the function $u$ defined by
\begin{equation*}
	u := \left\{
	\begin{array}{cl}
		\max(g_D,\psi) & \text{on } V,\\
		g_D & \text{on } U-V
	\end{array} \right.
\end{equation*}
is a plurisubharmonic function on $U$. 

Since on $\overline{\{z\in D:\sigma_2(z)<\beta\}}^c\cap\{z\in D:g_D<-c_2\}$, $\max\{g_D,\psi\}=\psi$, we can extend $u$ to be a bounded plurisubharmonic function on $M$ by setting it equal to $\psi$ off $\{z\in D:g_D(z)<-c_2\}$.

This modification makes $u-\sup_{z\in M}u(z)$ a semi-proper, negative plurisubharmonic function on $M$. Note that $u$
	is equal to $g_{D}$ in a neighborhood of $\zeta _{0}$. The 
domain $D$ is hyperconvex in the sense of Poletsky and so $g_{D}$,%
and hence $u$ has a strict logarithmic pole at $%
\zeta _{0}$ according to Theorem 4.3 of \cite{P paper}. Since $g_{M}(\cdot ,\zeta _{0})$ dominates  $%
u-\sup_{z\in M}u(z)$ on $M$, $g_{M}(\cdot ,\zeta _{0})$ also has a
strict logarithmic pole at $\zeta _{0}$.

%\bigskip
\implication{(2)}{(3)} To simplify the notation, we will
	denote the pluricomplex Green function with a strict logarithmic pole at $%
\zeta _{0}$ by $p_{M}$. Pick a coordinate chart $U$%
 around $\zeta _{0}$, such that $g_{M}(\cdot ,\zeta
_{0})\geq \ln \parallel \cdot -\zeta _{0}\parallel +c$ for some $c$, on $U$. Choose an $\varepsilon >0$ such that the ball
	around $\zeta _{0}$ with radius $\varepsilon >0$\textbf{, }$%
B(\zeta _{0},\varepsilon )$, in this coordinate system, that
	satisfies $B(\zeta _{0},\varepsilon )\subset \subset U$. Now for $%
\beta \ll \ln \varepsilon -c$ small enough, we can find an $%
0<\varepsilon ^{-}<\varepsilon $ such that $\Omega _{\beta }\cap
B(\zeta _{0},\varepsilon )\subset B(\zeta _{0},\varepsilon ^{-})\subset
\subset B(\zeta _{0},\varepsilon )$, where $\Omega _{\beta
}:=\{z\in M:g_{M}(z,\zeta _{0})<\beta \}$. Since $\Omega _{\beta }$ 
 is connected, (\cite{PS}), we conclude that $\Omega _{\beta }\subset B(\zeta
_{0},\varepsilon )$ for this choice of $\beta$.  Hence $%
g_{M}(\cdot ,\zeta _{0})$ is a semi-proper negative
	plurisubharmonic function on $M$.

Choose $\varepsilon >0$, $c>0$, $0<c<c^{+}$%
 such that, 
  $$\overline{B(\zeta _{0},\varepsilon )}\subset
\{z:p_{M}(z)<-c^{+}\}\subset \{z:p_{M}(z)<-c\}\subset \subset M.$$ 
	Pick a $C^{\infty }$ strictly plurisubharmonic, proper function $%
\sigma :M\rightarrow \lbrack 0,\infty )$ and set $D_{k}=\{z:\sigma
(z)<k\}$, $k=1,2,\cdots $. For some $k_{1}$ large
	we have:
\begin{equation*}
\overline{B(\zeta _{0},\varepsilon )}\subset \{z:p_{M}(z)<-c^{+}\}\subset
\{z:p_{M}(z)<-c\}\subset \subset D_{k_{1}}.
\end{equation*}

Let $\Omega _{-}:=D_{k_{1}}$, $\Omega _{+}:=\overline{\{z\in M:p_{M}(z)<-c\}}%
^{c}$.

Define a plurisubharmonic function $\rho_t$ on $M$ via $\rho_t(z):=\frac{t}{c^+}p_M(z)+t$, $t>0$.

Clearly;
\begin{enumerate}
	\item $\rho_t(z)\leq 0$ for $z\in B(\zeta_0,\varepsilon)$, $t>0$,
	\item $\rho_t(z)\geq \big(1-\frac{c}{c^+}\big)t$ for $z\in \Omega_+$, $t>0$.
\end{enumerate}
We choose a Hermitian metric on $M$ and denote by $dV$ the volume form coming from this metric.  Using the notation of Lemma 1 of \cite{A manus} we set $d\varepsilon =cd\mu$, where $\mu$ is a measure on $M$ that is equivalent to the volume form, and $c$ is a strictly positive real valued function on $M$. 

Fix $f\in \mathcal{O}(\Omega_-)$ with $\big(\int_{D_{k_1}}|f|^2d\varepsilon\big)^{1/2}\leq 1$. 

In view of Lemma 1 of \cite{A manus} we can, for each $t>0$, compose $f$ as, $f=f_++f_-$; $f_+\in \mathcal{O}(\Omega_+)$, $f_-\in \mathcal{O}(\Omega_-)$ such that
\begin{eqnarray*}
	\int_{\Omega_+}|f_+|^2e^{-\rho_t}d\varepsilon&\leq &C_1e^{-(1-\frac{c}{c^+})t}\\
	\int_{\Omega_-}|f_-|^2e^{-\rho_t}d\varepsilon&\leq &C_1e^{-(1-\frac{c}{c^+})t}
\end{eqnarray*}
and for some $C_1>0$ which is independent of $f$ and $t$. 

Since $\exists\, C_2>0$ such that
$$\int_{\Omega_+\cap\Omega_-}|f|^2e^{-\rho_t}d\varepsilon\leq C_2e^{-(1-\frac{c}{c^+})t},$$
we have;
$$\int_{\Omega_\pm}|f_\pm|^2e^{-\rho_t}d\varepsilon\leq C_1e^{\frac{c}{c^+}t}$$
and 
$$\int_{B(\zeta_0,\varepsilon)}|f_-|^2d\varepsilon\leq \int_{B(\zeta_0,\varepsilon)}|f_-|^2e^{-\rho_t}d\varepsilon\leq C_1e^{-(1-\frac{c}{c^+}t)}.$$
Set
\begin{equation*}
	F := \left\{
	\begin{array}{ccl}
		f_+ & \text{on } \Omega_+,\\
		f-f_- & \text{on } \Omega_-.
	\end{array} \right.
\end{equation*}
The analytic function $F\in \mathcal{O}(M)$ satisfies
\begin{equation}\label{2star1}
\exists\, K>0;\quad \int_M|F|^2d\varepsilon \leq Ke^{t(\frac{c}{c^+})},
\end{equation}
$$\int_{B(\zeta_0,\varepsilon)}|F-f|^2d\varepsilon=\int_{B\zeta_0,\varepsilon)}|f_-|^2\leq c_1e^{-(1-\frac{c}{c^+})t}.$$
We set for $f\in \mathcal{O}(M)$;
$$\parallel f\parallel_k:=\big(\int_{D_k}|f|^2d\varepsilon\big)^{1/2},\quad k=1,2,\cdots.$$
Plainly, $\{\parallel\cdot\parallel_k\}_{k=1}^\infty$ forms a fundamental system of norms generating the topology of $\mathcal{O}(M)$, and as usual we denote the unit ball corresponding to $\parallel\cdot\parallel_k$ by $\mathcal{U}_k$, $k=1,2,\cdots$. 

Let $B:=\{g\in\mathcal{O}(M): \int_M|g|^2d\varepsilon\leq 1\}$. Observe that $B\subseteq \mathcal{U}_k$ for each $k\in \mathbb{N}$. 

The above analysis can be summerized as
\begin{equation}\label{2star2}
\exists\, k_1,\lambda>0;\, \forall\, k>k_1,\exists\, c>0;\, \mathcal{U}_{k_1}\subset \frac{1}{r^\lambda}\mathcal{U}_{B(\zeta_0,\varepsilon^-)}+cr\mathcal{U}_k,\,\forall\, r\geq 1,
\end{equation}
where $0<\varepsilon^-<\varepsilon$ and $\mathcal{U}_{B(\zeta_0,\varepsilon^-)}=\{f\in \mathcal{O}(M):\sup_{\xi\in B(\zeta_0,\varepsilon^-)}|f(\xi)|\leq 1\}$.

Since for $0<r\leq 1$, (\ref{2star2}) is trivial, we conclude that $M$ has the property $\widetilde{\Omega}_{\zeta_0}$.

%\bigskip

\implication{(3)}{(1)}	Choose a volume form $dV$ on $M$ and an open\textit{\ strictly pseudoconvex }
exhaustion $\{D_{k}\}_{k=1}^{\infty }$, $\overline{D}_{k}\subset \mathring{D}%
_{k+1}$; $k=1,2,\cdots $, $\bigcup D_{k}=M$, of $M$ with $\zeta _{0}\in
D_{1} $. For $f\in \mathcal{O}(M)$, set $\parallel \cdot \parallel _{k}:=%
\big(\int_{D_{k}}|\cdot |^{2}dV\big)^{1/2}$, $k=1,2,\cdots $. In view of our
assumption $\exists $ $\varepsilon >0$ such that $B(\zeta _{0},\varepsilon
)\subset \subset D_{1}$ and \ref{2star1} 
\begin{equation*}
\exists\, k,\beta >0;\,\forall\, k>k_{1}\,\exists\, C_{k}>0\text{ such that }%
\mathcal{U}_{k_{1}}\subset r\mathcal{U}_{k}+\frac{C_{k}}{r^{\beta }}\mathcal{%
	U}_{0},\,\forall\, r>0, 
\end{equation*}%
where $\mathcal{U}_{k}$'s are closed unit balls corresponding to $\parallel
\cdot \parallel _{k}$'s and 
\begin{equation*}
\mathcal{U}_{0}=\{f\in \mathcal{O}(M):\sup_{\xi \in B(\zeta _{0},\varepsilon
	)}|f(\xi )|\leq 1\}. 
\end{equation*}

In the first part of the proof we employ the argument given in 29.16 Lemma
of \cite{MV} to construct a Hilbert space densely imbedded in $\mathcal{O}%
(M) $ and satisfies a certain interpolation condition. We will include this
line of reasoning here for completeness.

In view of nuclearity of $\mathcal{O}(M)$, for each $k=1,2,\cdots$, and $%
0<\varepsilon_k:=\min\{\frac{1}{2^k},\frac{1}{C_{k+2}^{\frac{2}{\beta}+1}}\}$%
, we can find a finite set $M_k\subset \mathcal{U}_{k+1}$ such that 
\begin{equation*}
\mathcal{U}_{k+1}\subset \varepsilon_k\mathcal{U}_k+M_k. 
\end{equation*}

Since for each $k=1,2,\cdots$, $M_s\subset\mathcal{U}_{k+1}$ for $s>k$, the
set $\Lambda=\bigcup_{s=1}^\infty M_s$ is bounded in $\mathcal{O}(M)$. For
each $g\in \mathcal{O}(M)^*$ we have: 
\begin{equation}  \label{2star3}
\parallel g\parallel^*_{k+1}\leq \varepsilon_k \parallel
g\parallel^*_k+\sup_{x\in \Lambda}|g(x)|,
\end{equation}
where $\parallel h\parallel^*_s:=\sup_{x\in \mathcal{U}_s}|h(x)|$ is the
dual norm of $\parallel\cdot\parallel_s$, $s=1,2,\cdots$.

Iterating inequality (\ref{2star3}) we get; 
\begin{align}  \label{1}
&\forall\, g\in\mathcal{O}(M)^*, \forall\, n>k_1 \\
\parallel g\parallel^*_n&\leq
(1+\varepsilon_{n-1}+\varepsilon_{n-1}\varepsilon_{n-2}+\cdots+%
\varepsilon_{n-1}\cdots\varepsilon_0)\sup_{x\in
	\Lambda}|g(x)|+\varepsilon_0\cdots\varepsilon_{n-1}\parallel g\parallel^*_0 
\notag \\
&\leq 2\sup_{x\in
	\Lambda}|g(x)|+\varepsilon_0\cdots\varepsilon_{n-1}\parallel g\parallel^*_0,
\notag
\end{align}
where $\parallel g\parallel_0^* =\sup_{x\in\mathcal{U}_0}|g(x)|$, and $%
\varepsilon_0=1$.

From (\ref{2star2}) it follows that, $\exists\, k_1,\beta>0$; $\forall\, n>k_1\,
\exists\, C_n$ such that $\forall\, g\in \mathcal{O}(M)^*$; 
\begin{equation*}
\parallel g\parallel^*_{k_1}\leq r\parallel g\parallel^*_n+ \frac{C_n}{%
	r^\beta}\parallel g\parallel^*_0,\quad \forall\, r>0. 
\end{equation*}
Inserting (\ref{1}) into this expression one gets: 
\begin{equation}  \label{2}
\parallel g\parallel^*_{k_1} \leq 2r \sup_{x\in \Lambda}|g(x)|+
(r\varepsilon_0\cdots \varepsilon_{n-1}+\frac{C_n}{r^\beta})\parallel
g\parallel^*_0,\quad \forall\, r, \forall\, g\in\mathcal{O}(M)^*.
\end{equation}
Now for $C_n\leq r^{\beta/2} \leq C_{n+1}$; 
\begin{equation*}
r\varepsilon_0\cdots \varepsilon_{n-1}\leq
C_{n+1}^{2/\beta}\varepsilon_0\cdots \varepsilon_{n-1}\leq \frac{%
	C_{n+1}^{2/\beta}}{C_{n+1}^{1/\beta+1}}=\frac{1}{C_{n+1}}\leq \frac{1}{%
	r^{\beta/2}} 
\end{equation*}
\begin{equation*}
\frac{C_n}{r^\beta}\leq \frac{r^{\beta/2}}{r^\beta}\leq \frac{1}{r^{\beta/2}}%
. 
\end{equation*}
Hence $\exists$ $R_0$; 
\begin{equation}  \label{3}
\parallel g\parallel^*_{k_1}\leq 2r\sup_{x\in \Lambda}|g(x)|+\frac{2}{%
	r^{\beta/2}}\parallel g\parallel^*_0,\quad \forall\, r\geq R_0.
\end{equation}
To get a Hilbert ball from $\Lambda$ we first consider the closed absolutely
convex hull of $\Lambda$; call it $\hat{\Lambda}$, and consider the Hilbert
norm $|\cdot|^2:= \sum_{k=1}^\infty \parallel\cdot \parallel^2_k\frac{1}{%
	2^kD_k^2}$, where $D_k=\sup_{\xi\in \Lambda}\parallel \xi\parallel_k$, $%
k=1,2,\cdots$, and let $B:=\{f\in\mathcal{O}(M):|f|\leq 1\}$. We denote by $%
H_B$ the Hilbert space imbedded in $\mathcal{O}(M)$, whose unit ball is $B$.

Taking into account that $\mathcal{O}(M)$ is separable, by enlarging $\Lambda$
if necessary, we can, without loss of generality, assume that $H_B$ is dense in 
$\mathcal{O}(M)$.

In view of (\ref{3}); 
\begin{equation}  \label{4}
\exists\, D>0;\, \forall\, g\in \mathcal{O}(M)^*;\quad \parallel
g\parallel^*_{k_1}\leq r\parallel g\parallel^*_B+\frac{D}{r^{\beta/2}}%
\parallel g\parallel^*_0;\quad \forall\, r>0,
\end{equation}
where $\parallel g\parallel^*_B:=\sup_{x\in B}|g(x)|$.

Taking minimum (over $r$) of the expression (\ref{4}) we obtain an
equivalent condition to $\widetilde{\Omega }_{\zeta _{0}}$, namely: 
\begin{align}
\exists\, \varepsilon >0,k_{1},& \,0<\gamma <1,\text{ and a Hilbert space }%
H_{\infty }\text{ densely imbedded in }\mathcal{O}(M)  \label{5} \\
\text{ and }c>0:&  \notag \\
\parallel g\parallel _{k_{1}}^{\ast }& \leq c\big(\parallel g\parallel
_{0}^{\ast }\big)^{\gamma }\big(\parallel g\parallel _{B}^{\ast }\big)%
^{1-\gamma }\quad \forall\, g\in \mathcal{O}(M)^{\ast },  \notag
\end{align}%
\textit{where} $\parallel \cdot \parallel _{0}^{\ast }=\sup_{\mathcal{U}%
	_{0}}|\cdot |$ .

Let $H$ denote closure of $\mathcal{O}(M)$ with respect to the
hilbertian norm $\left\Vert \cdot \right\Vert =\left( \int\limits_{B(\zeta
	_{0},\varepsilon )}\left\vert \cdot \right\vert ^{2}dV\right) ^{\frac{1}{2}%
}. $ Since for some constant $c>0,$ $\parallel g\parallel _{0}^{\ast }\leq
c\parallel g\parallel _{{}}^{\ast }\forall\, g\in \mathcal{O}(M)^{\ast },$ the
above statement holds for the dual norm $\parallel \cdot \parallel
_{{}}^{\ast }$as well.

The imbedding $\imath :H_{\infty }\hookrightarrow H_{{}}$ is
compact since $\mathcal{O}(M)$ is nuclear, and consequently has a
representation of the form: 
\begin{equation*}
\imath (x)=\sum_{n=0}^{\infty }\lambda _{n}\langle x,f_{n}\rangle _{\infty
}e_{n}, 
\end{equation*}%
for an orthonormal basis $\{f_{n}\}$, an orthonormal system $\{e_{n}\}$ for $%
H_{\infty }$ and $H_{{}}$ respectively, for some null sequence $%
\{\lambda _{n}\}$ consisting of strictly positive numbers.

Note that $\parallel f_{n}\parallel =\lambda _{n}$ and $\parallel
f_{n}^{\ast }\parallel _{{}}^{\ast }=\frac{1}{\lambda _{n}}$, where $%
f_{n}^{\ast }(\cdot )=\langle \cdot ,f_{n}\rangle _{\infty }$, $n=0,1,\cdots 
$.

Set $\varepsilon _{n}:=\ln \big(\frac{1}{\lambda _{n}}\big)$. Since $%
\{f_{n}\}$ is a bounded sequence in $\mathcal{O}(M)$, and $\varepsilon
_{n}\rightarrow \infty $, the function 
\begin{equation*}
\psi (z):=\overline{\lim }_{\xi \rightarrow z}\overline{\lim }_{n}\frac{\ln
	|f_{n}(\xi )|}{\varepsilon _{n}} 
\end{equation*}%
defines a plurisubharmonic function on $M$. Plainly, $\psi (z)\leq 0$ for $%
z\in M$, and since $\parallel f_{n}\parallel =\lambda _{n}$ for $z\in
B(\zeta _{0};\varepsilon ^{-})$ , $\varepsilon ^{-}<\varepsilon $, $\psi
(z)\leq -1$. Choose a $0<\gamma ^{+}<1$ with $\gamma ^{+}>\gamma $.

We will look at the sublevel set 
\begin{equation*}
\Omega_{\gamma^+}=\{z:\psi(z)<-\gamma^+\}. 
\end{equation*}
This set is non-empty since $B(\zeta_0;\varepsilon)\subseteq
\Omega_{\gamma^+}$.

Choose a point $^{{}}z\in \Omega _{\gamma ^{+}}$. In view of
Hartog's Lemma (p.50 in \cite{K}), $\exists\, c_{{}}=c(z) >0 $
such that 
\begin{align}
|f_{k}(z)|& \leq ce^{-\gamma ^{+}\varepsilon _{k}},\,k=1,2,\cdots ,\quad
\label{6} \\
\text{For an }f\text{ }& \in H_{\infty }\text{,\ \ \ \ \ \ \ \ \ \
	\ \ }f\text{ }(z)=\sum_{n}f_{n}^{\ast }(f)f_{n}(z),  \notag
\end{align}%
in view of (\ref{5}) and (\ref{6}) we have; 
\begin{eqnarray*}
	|f(z)| &\leq &\sum_{n}|f_{n}^{\ast }(f)||f_{n}(z)|\leq \big(%
	\sum_{n}\parallel f_{n}^{\ast }\parallel _{k_{1}}^{\ast }|f_{n}(z)|\big)%
	\parallel f\parallel _{k_{1}} \\
	&\leq &\tilde{c}\big(\sum_{n}\frac{1}{\lambda _{n}^{\gamma }}e^{-\gamma
		^{+}\varepsilon _{n}}\big)\parallel f\parallel _{k_{1}} \\
	&=&\tilde{c}\big(\sum_{n}e^{(\gamma -\gamma ^{+})\varepsilon _{n}}\big)%
	\parallel f\parallel _{k_{1}}\leq \kappa \parallel f\parallel _{k_{1}}\text{
		\ for some \ }\kappa =\kappa \left( z\right) . \\
	&&\text{\ \ \ \ \ \ \ \ \ \ \ \ \ \ \ \ \ \ \ \ \ \ \ \ \ \ \ \ \ \ \ \ \ \
		\ \ \ \ \ \ \ \ \ \ \ \ \ \ \ \ \ \ \ \ \ \ \ \ \ \ \ \ \ \ \ \ \ \ \ \ \ \
		\ \ \ \ \ \ \ \ \ \ \ \ \ \ \ \ \ \ \ \ \ \ \ \ \ \ \ \ }
\end{eqnarray*}

In the above estimate we have used the fact that increasing
permutation of the sequence $\left( \varepsilon _{n}\right) _{n}$ dominates
by a constant times the sequence $\left( \frac{1}{d_{n}\left(
	U,U_{k_{1}}\right) }\right)_{n}$, $U$ being the unit ball of $H$, which in turn dominates a constant
times $\left( n^{\frac{1}{d}}\right) _{n}$, where $d=dimM$ (See
Proposition 1.1 of \cite{AKT manuscripta}). Since $H_{\infty }$ is dense in $%
\mathcal{O}(M)$, the above inequality is true for each $f\in \mathcal{O}(M)$%
. Choose a relatively compact holomorphically convex $\hat{D}_{k_{1}}\subset
\subset M$ that contains $\overline{D}_{k_{1}}$. The analysis above yields: 
\begin{equation*}
\exists\, C>0:\forall\, f\in \mathcal{O}(M)\quad |f(z)|\leq C\sup_{\xi \in \hat{D%
	}_{k_{1}}}|f(\xi )|,\quad j=1,2,\cdots . 
\end{equation*}%
Employing a well known trick of applying this inequality to $f^{n}$
for a given $f\in \mathcal{O}(M)$, taking $n$th root of both sides as $%
n\rightarrow \infty $ we conclude that $z\in \hat{D}_{k_{1}}$.

This finishes the proof of the theorem.

\end{proof}

We now look into the question posed in the beginning of the section. 

\begin{theorem}\label{thm3}
	Let $M$ be a Stein manifold of dimension $d$, and suppose it satisfies one of the equivalent conditions of Theorem \ref{thm2} for some $\zeta_0\in M$. Then $\Delta(\mathcal{O}(M))=\Delta(\mathcal{O}(\Delta^d))$. 
\end{theorem}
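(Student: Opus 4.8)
The plan is to prove the equality $\Delta(\mathcal{O}(M))=\Delta(\mathcal{O}(\Delta^d))$ as two inclusions, after first deciding which of them is unconditional. For \emph{every} Stein manifold of dimension $d$ one has $\Delta(\mathcal{O}(\Delta^d))\subseteq\Delta(\mathcal{O}(M))$: with $L^2$-exhaustion norms, polynomial approximation on the holomorphically convex sets $D_k$ gives $d_n(\mathcal{U}_{k^+},\mathcal{U}_k)\leq Ce^{-c n^{1/d}}$ for a suitable $k^+$, which is exactly the estimate $-\ln d_n\geq c\,n^{1/d}$ recorded in Proposition 1.1 of \cite{AKT manuscripta} and already used in the proof of Theorem~\ref{thm2}. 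Since a sequence $(t_n)$ lies in $\Delta(\mathcal{O}(\Delta^d))$ precisely when $t_n=o(e^{\varepsilon n^{1/d}})$ for every $\varepsilon>0$ (the $\ell^2$ description from the Introduction), pairing such growth against these diameters gives $t_n d_n\to 0$, hence membership in $\Delta(\mathcal{O}(M))$. The example of $\Delta\times\mathbb{C}$ shows this inclusion can be strict, so it is the reverse inclusion $\Delta(\mathcal{O}(M))\subseteq\Delta(\mathcal{O}(\Delta^d))$ that must consume the hypothesis.

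For the reverse inclusion I would work with condition (3), the property $\widetilde{\Omega}_{\zeta_0}$, and reuse the apparatus built in the proof of Theorem~\ref{thm2}. The decisive geometric fact is that $B(\zeta_0,\varepsilon)\subset\subset M$ is a genuine $d$-dimensional ball, so the local sup-norm ball $\mathcal{U}_0$ behaves, relative to the global system, like $\mathcal{O}$ of a bounded domain in $\mathbb{C}^d$; equivalently the compact embedding $H_\infty\hookrightarrow H$ of that proof has singular values with $\varepsilon_n=\ln(1/\lambda_n)$ of order $n^{1/d}$ from \emph{both} sides, again by Proposition 1.1 of \cite{AKT manuscripta}. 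I would then feed these two-sided asymptotics into the interpolation inequality (\ref{5}), $\|g\|^*_{k_1}\leq c(\|g\|^*)^\gamma(\|g\|^*_B)^{1-\gamma}$, which is the dual form of $\widetilde{\Omega}_{\zeta_0}$: because the gap between the two Hilbert norms is measured by $\lambda_n$, comparable to $e^{-cn^{1/d}}$, a Hilbert-space interpolation/diameter estimate transfers this finite-type decay to the global diameters and yields an \emph{upper} bound $-\ln d_n(\mathcal{U}_{k^+},\mathcal{U}_k)\leq C\,n^{1/d}$ with the quantifier shape — a rate that stays bounded as $k^+\to\infty$ for a fixed lower index — characteristic of $\mathcal{O}(\Delta^d)$. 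Reading these bounds back through the same $\ell^2$/little-$o$ description then shows that $(t_n)\in\Delta(\mathcal{O}(M))$ forces $t_n e^{-\varepsilon n^{1/d}}\to 0$ for every $\varepsilon>0$, that is $(t_n)\in\Delta(\mathcal{O}(\Delta^d))$, completing the reverse inclusion and the theorem.

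The step I expect to be the main obstacle is precisely this quantifier bookkeeping in the local-to-global transfer. Both $\Delta(\mathcal{O}(\Delta^d))$ and the strictly larger $\Delta(\mathcal{O}(\Delta\times\mathbb{C}))$ have diameters of order $n^{1/d}$; what separates them is not the order but the fact that for $\Delta^d$ the decay rate remains bounded as the upper index tends to infinity with the lower index fixed, whereas for $\Delta\times\mathbb{C}$ it is unbounded. The delicate point is therefore to show that $\widetilde{\Omega}_{\zeta_0}$ — through the negativity of the semi-proper plurisubharmonic function of condition (1) and the relative compactness of its sublevel sets, which confine $M$ in a pluripotential sense like a bounded hyperconvex domain — forces this bounded, \enquote{capped} rate rather than a mere order-$n^{1/d}$ estimate, and then to align the bound so produced against the upper index $k^+$ that membership in $\Delta(\mathcal{O}(M))$ supplies for each fixed lower index.
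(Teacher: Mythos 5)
Your unconditional inclusion $\Delta(\mathcal{O}(\Delta^d))\subseteq\Delta(\mathcal{O}(M))$ is correct (it is the content of the two-sided diameter estimates of Proposition 1.1 of \cite{AKT manuscripta} and needs no hypothesis), but the reverse inclusion --- which is the entire content of the theorem --- is not actually established by your sketch, and the gap is exactly where you suspect it is. The bound you propose to extract from $\widetilde{\Omega}_{\zeta_0}$, namely $-\ln d_n(\mathcal{U}_{k^+},\mathcal{U}_k)\leq C\,n^{1/d}$ with some fixed constant $C$, only yields $\Delta(\mathcal{O}(M))\subseteq\Delta(\mathcal{O}(\mathbb{C}^d))$, which again holds for \emph{every} Stein manifold of dimension $d$ and consumes none of the hypothesis. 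To get $\Delta(\mathcal{O}(M))\subseteq\Delta(\mathcal{O}(\Delta^d))$ directly you would need, for every $\varepsilon>0$, a lower index $k$ such that $d_n(\mathcal{U}_{k^+},\mathcal{U}_k)\geq c_\varepsilon e^{-\varepsilon n^{1/d}}$ for \emph{all} $k^+$; you correctly identify this ``capped rate with $\varepsilon$ arbitrarily small'' as the decisive point, but you offer no mechanism for producing it from $\widetilde{\Omega}_{\zeta_0}$, and it is not clear that one exists without essentially reproving a structure theorem.

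The paper closes this gap by a different route: it does not prove the inclusion $\Delta(\mathcal{O}(M))\subseteq\Delta(\mathcal{O}(\Delta^d))$ directly, but instead proves the \emph{negative} statement $\Delta(\mathcal{O}(M))\neq\Delta(\mathcal{O}(\mathbb{C}^d))$ and then invokes the dichotomy of Theorem 4.4 of \cite{Ay}, according to which $\Delta(\mathcal{O}(M))$ for a $d$-dimensional Stein manifold can only equal $\Delta(\mathcal{O}(\Delta^d))$ or $\Delta(\mathcal{O}(\mathbb{C}^d))$. Ruling out the second alternative requires only a single uniform lower bound, and this is precisely what $\widetilde{\Omega}_{\zeta_0}$ delivers: in the dual/bounded-set form $\mathcal{V}_1\subseteq r^{-\lambda}\mathcal{V}_0+CrB$ with $B$ bounded, one gets $d_n(\mathcal{V}_1;\mathcal{V}_0)\leq\kappa\,d_n(B;\mathcal{V}_0)^{\lambda/(1+\lambda)}$, and since $B$ is absorbed by every $\mathcal{V}_m$ this gives $d_n(\mathcal{V}_m;\mathcal{V}_0)\geq c\,R_0^{-(1+1/\lambda)\alpha_n}$ with $R_0$ \emph{independent of} $m$; hence $(R^{\alpha_n})\notin\Delta(\mathcal{O}(M))$ for $R\gg R_0^{1+1/\lambda}$, although $(R^{\alpha_n})\in\Delta(\mathcal{O}(\mathbb{C}^d))$. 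Your proposal never uses such a dichotomy (nor the bounded set $B$), so as written it cannot conclude; to repair it you should either import Theorem 4.4 of \cite{Ay} and replace your second inclusion by the exclusion argument above, or supply a genuinely new proof of the capped-rate lower bounds, which would amount to a new proof of that dichotomy in the presence of $\widetilde{\Omega}_{\zeta_0}$.
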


\begin{proof}
	Choose a $\mathcal{C}^\infty$-strictly plurisubharmonic exhaustion function $p:M\to (-\infty,\infty)$ and let $\Omega_k=\{z:p<k\}$, $k\in \mathbb{Z}^+$ with $p$ choosen such that $\zeta_0\in \Omega_0$. We assume that $M$ has the property $\hat{\Omega}_{\zeta_0}$. Choose $\varepsilon<\varepsilon^+$, $\overline{B(\zeta_0,\varepsilon^+)}\subset \Omega_{k_1}$ and a sequence of integers $\{k_n\}$ with $k_1<k_2<\cdots$ tending to infinity, where $\varepsilon$ and  $k_1$ are as in the definition of $\widetilde{\Omega}_{\zeta_0}$. Using the notation of Theorem \ref{thm2}, \implication{(3)}{(1)}, we set $$\mathcal{V}_0=\{f\in \mathcal{O}(M):\big(\int_{B(\zeta_0,\varepsilon^+)}|f|^2d\varepsilon\big)^{1/2}\leq 1\}\quad \textrm{and}$$ 
	$$\mathcal{V}_n=\{f\in \mathcal{O}(M):\big(\int_{D_{k_n}}|f|^2d\varepsilon\big)^{1/2}\leq 1\},\quad n=1,2,\cdots.$$ 
	In view of Proposition 3.8 of \cite{AS} and the condition $\widetilde{\Omega}_{\zeta_0}$ we have: 
	
	\begin{equation}\label{3star1}
	\forall\, n=0,1,\cdots, \forall\, k>n, \, \exists\, j\textrm{ and }C>0\textrm{ with } \mathcal{V}_{k+1}\subset\subset r^j\mathcal{V}_n+\frac{C}{r}\mathcal{V}_k,\quad \forall\, r>0.
	\end{equation}
	Also we have:
	\begin{equation}\label{3star2}
	\forall\, k=0,\cdots,\exists\, 0<\gamma<1 \textrm{ and } C>0:\,\parallel f\parallel_{k+1}\leq C \parallel f\parallel_k^\gamma \parallel f\parallel_{k+2}^{1-\gamma}.
	\end{equation}
	This "two-constants theorem" type condition can be perceived in many ways, one of which is to employ $p$-measures in its proof as in \cite{Ay istanbul} p. 125. Now, from the discussion leading to Proposition 1.1 of \cite{AKT manuscripta} we conclude that:
	$$0<\underline{\lim}\frac{-\ln{d_n(\mathcal{V}_1;\mathcal{V}_0)}}{\alpha_n}\leq \overline{\lim}\frac{-\ln{d_n(\mathcal{V}_1;\mathcal{V}_0)}}{\alpha_n}<\infty,$$
	where $\alpha_n:= n^{1/d}$, $d=\dim M$, is the \emph{associated exponent} sequence of $\mathcal{O}(M)$ (\cite{AKT manuscripta}). Choose $R_0$ large, $1\ll R_0$ such that:
	\begin{equation}\label{3star3}
	\exists\, n_0:\, n\geq n_0 \, \Rightarrow \, \frac{1}{R_0^{\alpha_n}}\leq d_n(\mathcal{V}_1;\mathcal{V}_0).
	\end{equation}
	We estimate $d_n(\mathcal{V}_1;\mathcal{V}_0)$, $n\in \mathbb{N}$ using $\widetilde{\Omega}_{\zeta_0}$ as in \cite{T Karadeniz}. In view of 29.13 Lemma of \cite{MV} and (\ref{3}) of the proof \implication{(2)}{(3)} of Theorem \ref{thm2}, an equivalent form of $\widetilde{\Omega}_{\zeta_0}$ in our notation reads as; $\exists$ bounded set $B$ and $\lambda>0$ such that 
	\begin{equation}\label{3star4}
	\mathcal{V}_1\subseteq \frac{1}{r^\lambda}\mathcal{V}_0+CrB;\quad \forall\, r>0.
	\end{equation}
	
	Now suppose that $\rho>0$ satisfies $B\subseteq \rho\mathcal{V}_0+\mathcal{L}$, for some finite dimensional subspace $\mathcal{L}$ with $\dim(\mathcal{L})\leq n$. 
	
	In view of (\ref{3star4}), this leads to 
	$$\mathcal{V}_1\subseteq \big(\frac{1}{r^\lambda}+Cr\rho\big)\mathcal{V}_0+\mathcal{L},\quad \forall\, r>0,$$
	and hence to:
	$$d_n(\mathcal{V}_1;\mathcal{V}_0)\leq \big(\frac{1}{r^\lambda}+Cr\rho\big) \mathcal{V}_0+\mathcal{L},\quad \forall\, r>0.$$
	
	Taking infimum of $\big(\frac{1}{r^\lambda}+Cr\rho\big)$ over $r$ first, then taking infimum over $\rho$ we get:
	
	\begin{equation}\label{3star5}
	\exists\, \kappa >0; \quad d_n(\mathcal{V}_1;\mathcal{V}_0)\leq \kappa d_n(B,\mathcal{V}_0)^{\lambda/1+\lambda}.
	\end{equation}
	
	We claim that 
	$$\Delta(\mathcal{O}(M))\neq \Delta(\mathcal{O}(\mathbb{C}^d))=\{(\xi_n):\exists\, R\geq 1\textrm{ and }C>0; |\xi_n|\leq CR^{\alpha_n}\}.$$
	
	Anticipating a contradiction, we assume that $\Delta(\mathcal{O}(M))= \Delta(\mathcal{O}(\mathbb{C}^d))$. Choose an $R\gg R_0^{1+1/\lambda}$. Since $\{R^{\alpha_n}\}_n\in \Delta(\mathcal{O}(\mathbb{C}^d))$, $\exists\, m\in \mathbb{N}$ such that
	\begin{equation}\label{3star6}
	\lim_nR^{\alpha_n}d_n(\mathcal{V}_m;\mathcal{V}_0)=0.
	\end{equation}
	
	So by (\ref{3star3}) and (\ref{3star5}):
	\begin{eqnarray*}
	\Big(\frac{R}{R_0^{1+1/\lambda}}\Big)^{\alpha_n}&\leq& R^{\alpha_n}d_n(\mathcal{V}_1;\mathcal{V}_0)^{1+1/\lambda}\leq \big(\kappa^{1+1/\lambda}\big)R^{\alpha_n}d_n(B;\mathcal{V}_0)\\
	&\leq &CR^{\alpha_n}d_n(\mathcal{V}_m;\mathcal{V}_0).
	\end{eqnarray*}

	Hence, (\ref{3star6}) gives us the desired contradiction. 
	
	In view of Theorem 4.4 of \cite{Ay} we conclude that $\Delta(\mathcal{O}(M))=\Delta(\mathcal{O}(\Delta^d))$. This finishes the proof of the theorem. 
\end{proof}

We now look at some immediate corollaries of Theorem \ref{thm3}.

\begin{corollary}\label{cor4}
	Let $M$ be an open Riemann surface. Then the following are equivalent:
	\begin{enumerate}
		\item $M$ is hyperbolic.
		\item $M$ possesses a semi-proper negative subharmonic function.
		\item $\Delta(\mathcal{O}(M))=\Delta(\mathcal{O}(B(0,1)))=\{(\xi_n):\sup_n|\xi_n|r^n<+\infty,\,\forall\, r<1\}$. 
	\end{enumerate}
\end{corollary}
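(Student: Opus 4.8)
The plan is to establish the cycle \implication{(1)}{(2)}, \implication{(2)}{(3)}, \implication{(3)}{(1)}, leaning on Theorems \ref{thm1}, \ref{thm2} and \ref{thm3} together with two classical facts about open Riemann surfaces. First I would fix the setup: by the Behnke--Stein theorem every noncompact Riemann surface is Stein, so $M$ is a Stein manifold of complex dimension $d=1$, on which plurisubharmonic and subharmonic functions coincide and $\Delta^d=\Delta^1=B(0,1)$; thus all three theorems apply with $d=1$. I would then note that condition $(2)$ is exactly condition $(1)$ of Theorem \ref{thm2} quantified over the pole: a semi-proper negative subharmonic function $f$ has, by definition of semi-properness, a nonempty relatively compact sublevel set $\{f<c\}$, and any $\zeta_0$ in it witnesses Theorem \ref{thm2}$(1)$; conversely Theorem \ref{thm2}$(1)$ yields such an $f$.

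With this dictionary in place, \implication{(2)}{(3)} is immediate from Theorem \ref{thm3}: since $M$ satisfies one of the equivalent conditions of Theorem \ref{thm2} for some $\zeta_0$, we obtain $\Delta(\mathcal{O}(M))=\Delta(\mathcal{O}(\Delta^1))=\Delta(\mathcal{O}(B(0,1)))$. The only bookkeeping here is to reconcile the two descriptions of $\Delta(\mathcal{O}(B(0,1)))$: the $d=1$ instance of the formula in the introduction reads $\{(\xi_n):\sum_n|\xi_n|^2 e^{-2n/k}<\infty,\ \forall k\}$, while the statement uses $\{(\xi_n):\sup_n|\xi_n|r^n<\infty,\ \forall r<1\}$, and both describe precisely the sequences with $\limsup_n|\xi_n|^{1/n}\le 1$, so they agree. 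For \implication{(3)}{(1)} I would apply Theorem \ref{thm1} with $d=1$ to conclude that the pluricomplex Green function exists at every point of $M$; in complex dimension one this is the classical Green function, and by classical Riemann surface theory $M$ is hyperbolic precisely when it carries a Green function (the parabolic case being the class $O_G$), which is $(1)$.

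The remaining implication \implication{(1)}{(2)} is where the real work lies and is the step I expect to be the main obstacle. Hyperbolicity only furnishes the Green function $g_M(\cdot,z_0)$, whereas $(2)$ demands a \emph{semi-proper} negative subharmonic function, so I must upgrade mere existence to relative compactness of a sublevel set. The plan is to exploit that in complex dimension one the Green function automatically has a strict logarithmic pole, $g_M(z,z_0)=\ln\|z-z_0\|+O(1)$ near $z_0$, and then to reuse verbatim the semi-properness argument from the first paragraph of the proof of \implication{(2)}{(3)} in Theorem \ref{thm2}: for $\beta$ sufficiently negative the lower bound traps $\Omega_\beta\cap B(z_0,\varepsilon)$ inside a smaller coordinate ball $B(z_0,\varepsilon^-)$, and since $\Omega_\beta=\{z:g_M(z,z_0)<\beta\}$ is connected by \cite{PS} and contains $z_0$, it cannot escape the annulus and hence lies in $B(z_0,\varepsilon)\subset\subset M$. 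This makes $g_M(\cdot,z_0)$ semi-proper, yielding $(2)$ and closing the cycle. The delicate point is exactly this passage from the strict logarithmic pole to a genuinely relatively compact sublevel set, where the connectedness of $\Omega_\beta$ is indispensable for ruling out stray components of the sublevel set far from the pole.
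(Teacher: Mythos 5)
Your proposal is correct and follows essentially the same route as the paper: the implications \implication{(2)}{(3)} and \implication{(3)}{(1)} are quoted from Theorems \ref{thm3} and \ref{thm1} exactly as in the text, and your argument for \implication{(1)}{(2)} — trapping a sublevel set of the classical Green function in a coordinate ball and invoking the connectedness of sublevel sets from \cite{PS} — is the paper's argument, differing only in that the paper bounds the sublevel set using the minimum of $g_M$ on $\partial B(\zeta,\varepsilon)$ (via continuity) rather than the strict logarithmic pole.
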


\begin{proof}
	If $M$ is hyperbolic, it has a non-trivial Green's function $g_M(\cdot,\cdot)$ at each point of $M$, which is harmonic except at its singularity. Fix a point $\zeta\in M$ and a small ball $B(\zeta,\varepsilon)$ in some chart around $\zeta$.
	
	Consider the sublevel set $\{z: g_M(z;\zeta)<-c\}$ where $-c<\min_{\varepsilon \in \partial B(\zeta,\varepsilon)}g_M(\zeta,\varepsilon)$. Such a $c>0$ exists since $g_M(\zeta,\cdot)$ is continuous. Taking into account that the sublevel sets of the Green functions are connected (\cite{PS}), we conclude that $$\{z:g_M(z,\zeta)<-c\}\subset B(\zeta,\varepsilon).$$ 
	
	So $M$ has a semi-proper negative subharmonic function. The other implications are consequences of Theorem \ref{thm3} and Theorem \ref{thm1}, respectively.
\end{proof}

Another consequence of Theorem \ref{thm3} and Theorem \ref{thm1} is:

\begin{corollary}\label{cor5}
	Let $M$ be a Stein manifold. If $M$ possesses a semi-proper complex Green function with a pole at $\zeta_0\in M$, then for each $\zeta\in M$ pluricomplex Green function with a pole at $\zeta$ exists. 
\end{corollary}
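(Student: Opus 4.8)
The plan is to recognize this as a direct chaining of the three theorems already established, so the only real work is to confirm that a semi-proper pluricomplex Green function with a pole at $\zeta_0$ supplies the negative plurisubharmonic function demanded by condition (1) of Theorem \ref{thm2}. Once that verification is in place, Theorems \ref{thm3} and \ref{thm1} close the argument mechanically.

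First I would observe that whenever the pluricomplex Green function $g_M(\cdot,\zeta_0)$ exists (that is, is not identically $-\infty$), it is itself a negative plurisubharmonic function on $M$: from its defining supremum it is dominated by $0$, and being the upper envelope of strictly negative plurisubharmonic functions it satisfies $g_M(\cdot,\zeta_0)<0$ throughout $M$. The hypothesis that this function is \emph{semi-proper} then supplies a real constant $c$ for which the sublevel set $\{z\in M: g_M(z,\zeta_0)<c\}$ is nonempty and relatively compact in $M$.

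The one point that must be checked is that this relatively compact sublevel set actually contains the pole $\zeta_0$. This is immediate from the pole condition built into the definition of the Green function: on a coordinate neighborhood of $\zeta_0$ one has $g_M(z,\zeta_0)\le \ln\|z-\zeta_0\|+c'$ for some constant $c'$, whence $g_M(\zeta_0,\zeta_0)=-\infty<c$ and so $\zeta_0\in\{z:g_M(z,\zeta_0)<c\}$. Therefore condition (1) of Theorem \ref{thm2} holds at $\zeta_0$, and consequently $M$ satisfies all of the equivalent conditions of that theorem for this choice of $\zeta_0$ (in particular the property $\widetilde{\Omega}_{\zeta_0}$).

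With one of the equivalent conditions of Theorem \ref{thm2} in hand at $\zeta_0$, Theorem \ref{thm3} delivers the diametral-dimension identity $\Delta(\mathcal{O}(M))=\Delta(\mathcal{O}(\Delta^d))$, where $d=\dim M$. Feeding this identity into Theorem \ref{thm1} yields the existence of the pluricomplex Green function at every point of $M$, which is the assertion. I do not anticipate any genuine obstacle here beyond bookkeeping; the only step warranting a moment of care is the observation that the logarithmic pole forces $\zeta_0$ into the compact sublevel set, and even this follows at once from the pole asymptotics.
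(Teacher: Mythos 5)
Your proposal is correct and follows exactly the route the paper intends: the semi-proper pluricomplex Green function at $\zeta_0$ furnishes condition (1) of Theorem \ref{thm2} (the pole asymptotics placing $\zeta_0$ in the relatively compact sublevel set, as you note), Theorem \ref{thm3} then gives $\Delta(\mathcal{O}(M))=\Delta(\mathcal{O}(\Delta^d))$, and Theorem \ref{thm1} concludes. The paper states the corollary as an immediate consequence of Theorems \ref{thm3} and \ref{thm1} without further argument, so your verification of the hypothesis of Theorem \ref{thm2} is precisely the bookkeeping being left to the reader.
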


We would like to close this section by recalling a linear topological invariant introduced by D. Vogt (\cite{VB}) from the structure theory of nuclear Fr\'echet spaces. 

\begin{definition}
	Let $\mathcal{X}$ be a Fr\'echet space and $\{\parallel \cdot\parallel_k\}_k$ a fundamental system of seminorms generating the topology. $\mathcal{X}$ is said to have the property $\widetilde{\Omega}$ in case;
	$$\forall\, k_0\,\exists\, k_1,\lambda >0;\,\forall\, k>k_1, \,\exists\, C>0,\quad \mathcal{U}_{k_1}\subseteq \frac{C}{r}\mathcal{U}_{k_0}+r^\lambda \mathcal{U}_k,\quad \forall\, r>0,$$
	where as usual, $\mathcal{U}_s$ denotes the closed unit ball of the local Banach space $\mathcal{X}_s$, $s=1,2,\cdots$.
\end{definition}

This property does not depend upon the generating system $\{\parallel\cdot\parallel_k\}$ and is in fact a linear topological invariant; that is, if $\mathcal{X}$ has $\widetilde{\Omega}$ then any Fr\'echet space isomorphic to $\mathcal{X}$ as a linear topological space, also enjoys this property. 

Like all $\Omega$-type conditions, $\widetilde{\Omega}$ passes to quotient spaces. For more information about the property $\widetilde{\Omega}$, we refer the reader to (\cite{VB,DMV Bulletin France}). Stein manifolds $M$, for which $\mathcal{O}(M)$ possesses the property $\widetilde{\Omega}$ were characterized in \cite{A D-issue}. 

Plainly the property $\widetilde{\Omega}$ for $\mathcal{O}(M)$, $M$ a Stein manifold, implies that $M$ possesses the property $\widetilde{\Omega}_\zeta$ for every point $\zeta$ is $M$. Consequently, $\widetilde{\Omega}$ appears as a linear topological invariant of $\mathcal{O}(M)$ that ensures for every $\zeta\in M$ the existence of a semi-proper negative plurisubharmonic on $M$ with a relatively compact sublevel set containing $\zeta$. 

On the other hand, possessing $\widetilde{\Omega}_\zeta$ at every point for a Stein manifold $M$ need not imply that $\mathcal{O}(M)$ has the property $\widetilde{\Omega}$. To see this, consider the punctured disc $B(0,1)\setminus\{0\}\subseteq\mathbb{C}$. This domain obviously enjoys the property $\widetilde{\Omega}_\zeta$ for each of its points. Employing Laurent expansion for analytic functions on the punctured disc, it is not difficult to show that $\mathcal{O}(B(0,1)\setminus\{0\})\approx \mathcal{O}(\mathbb{C})\times \mathcal{O}(B(0,1))$ as Fr\'echet spaces (see \cite{Rolewicz}, p. 376). If the space of analytic functions on this punctured disc had the property $\widetilde{\Omega}$, $\mathcal{O}(\mathbb{C})$, being a quotient of $\mathcal{O}(B(0,1)\setminus\{0\})$, would also enjoy this property. But this cannot be, eg., in view of Theorem  \ref{thm3} and the above remarks.

\section{Pluri-Greenian and Locally Uniformly Pluri-Greenian
	Complex Manifolds.}\label{section3}

E. Poletsky in $\left[ 18\right] $ introduced the concepts of \textit{%
	pluri-Greenian }and \textit{locally uniformly pluri-Greenian} complex
manifolds. In this section we look into these notions from our
	point of view.

A complex manifold $M$ is called \textit{pluri-Greenian} if for each $\omega
\in M$, the pluricomplex Green function $g_{M}(\cdot ,\omega )$ has a strict
logarithmic pole at $\omega $.

On pluri-Green manifolds the pluricomplex Green function $g_{M}(\cdot
,\omega )$ satisfies $(dd^{c}g_{M}(\cdot ,\omega ))^{d}=(2\pi )^{d}\delta
_{\omega }$ if $d$ is the dimension of the manifold.

Consequently, in view of Theorem 2.2, a Stein manifold $M$
	is pluri-Greenian if and only if it has the property $\widetilde{\Omega }%
_{z}$ for every $z\in M$.

Recall that a complex manifold $M$ is called \textit{locally} \textit{%
	uniformly} \textit{pluri-Greenian }in case every point $w_{0}\in M$ has
a coordinate neighborhood $U$ with the following property: there is an open
set $W\subseteq U$ containing $w_{0}$ and a constant $c$ such that $%
g_{M}\left( z,w\right) \geq \ln \left\Vert z-w\right\Vert +c$ on $U$
whenever $w\in W$ (see \cite{P paper}, \cite{P yeni}).

Balls in $\mathbb{C}^{N}$ plainly satisfy the required property; hence bounded domains in $\mathbb{C}^{N},$ are \textit{locally} \textit{uniformly} \textit{pluri-Greenian}. It turns out
that existence of a pluricomplex Green function with a strict logarithmic
pole at each point of a complex manifold ensures that it is locally
uniformly pluri-Greenian.

\begin{theorem}
	1) A complex manifold is locally uniformly pluri-Greenian if and only if it is
	pluri-Greenian.
	
	2) A Stein manifold $M$ is locally uniformly pluri-Greenian if and only if it has
	the property $\widetilde{\Omega}_{z}$ for each $z\in M$
\end{theorem}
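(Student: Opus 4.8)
The plan is to prove the two parts using the equivalences already established. For part 1), the nontrivial direction is that pluri-Greenian implies locally uniformly pluri-Greenian, since the reverse is immediate: if $M$ is locally uniformly pluri-Greenian, then for each $w_0$ the defining inequality $g_M(z,w) \geq \ln\|z-w\| + c$ applied at $w = w_0$ gives the lower bound for a strict logarithmic pole, and the matching upper bound $g_M(z,w_0) \leq \ln\|z-w_0\| + c_2$ holds automatically near the pole (it is built into the definition of the Green function as a supremum of functions with exactly that local behavior). So the content is the forward direction.

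For the hard direction I would argue by a normal families / semicontinuity argument. Fix $w_0 \in M$ and a coordinate ball $B(w_0,\varepsilon)$. The strict logarithmic pole at a single point $w$ gives a constant $c(w)$ with $g_M(z,w) \geq \ln\|z-w\| + c(w)$ on a neighborhood of $w$; the goal is to produce a \emph{single} constant $c$ and a \emph{single} neighborhood $U$ that works uniformly for all $w$ in some open $W \ni w_0$. First I would record that $w \mapsto g_M(z,w)$ has good continuity/monotonicity properties and that the function $(z,w) \mapsto g_M(z,w) - \ln\|z-w\|$ is, away from issues on the diagonal, controlled from below. The natural mechanism is to consider, for a fixed small ball, the normalized functions and show by an extremal/compactness argument that $\liminf_{w \to w_0}$ of the local constants cannot drop to $-\infty$; if it did, one could extract a limiting negative plurisubharmonic function violating the strict pole at $w_0$ itself. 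Concretely I would suppose for contradiction that no uniform $c$ exists, take a sequence $w_n \to w_0$ with deteriorating constants, and use upper-semicontinuity of the regularized $\limsup$ together with the Hartogs-type lemma (p.~50 of \cite{K}, already invoked in the proof of Theorem~\ref{thm2}) to contradict the strict logarithmic pole of $g_M(\cdot,w_0)$.

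For part 2), the statement is essentially a corollary once part 1) is in hand. I would combine part 1) with the remark already made in the text: in view of Theorem~\ref{thm2}, a Stein manifold has a strict logarithmic pole of $g_M(\cdot,z)$ at $z$ (equivalently, a semi-proper pluricomplex Green function with strict pole at $z$, condition (2) of Theorem~\ref{thm2}) if and only if it has $\widetilde{\Omega}_z$. Thus $M$ is pluri-Greenian, i.e. has a strict logarithmic pole at \emph{every} point, if and only if it has $\widetilde{\Omega}_z$ for every $z \in M$. Chaining this with part 1) yields: $M$ locally uniformly pluri-Greenian $\iff$ $M$ pluri-Greenian $\iff$ $\widetilde{\Omega}_z$ holds for every $z$. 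The one subtlety to check is that the ``strict logarithmic pole'' in the definition of pluri-Greenian matches exactly the pole condition appearing in condition (2) of Theorem~\ref{thm2}; since both use the same two-sided estimate $\ln\|z-w\| + c_1 \leq g \leq \ln\|z-w\| + c_2$, this is a direct identification rather than a computation.

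The main obstacle I anticipate is the forward direction of part 1): upgrading a pointwise strict logarithmic pole to a \emph{locally uniform} one. The delicate point is controlling the lower-bound constant $c(w)$ uniformly as $w$ varies, since a priori this constant could degenerate. The right tool is a semicontinuity argument for the map $w \mapsto g_M(\cdot,w)$ combined with the fact that the poles are strict, but one must be careful that the limiting plurisubharmonic function obtained from a bad sequence genuinely inherits a pole at $w_0$ and thereby contradicts the strictness hypothesis at $w_0$ itself; making that extraction rigorous, and ruling out loss of mass or escape to $-\infty$ on the diagonal, is where the real work lies.
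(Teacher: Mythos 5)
Your treatment of part 2) and of the easy direction of part 1) matches the paper: part 2) is exactly the chain ``locally uniformly pluri-Greenian $\iff$ pluri-Greenian $\iff$ $\widetilde{\Omega}_z$ for all $z$'' via Theorem \ref{thm2}, and the reverse direction of part 1) is indeed immediate (the uniform upper bound near the pole follows by comparing each candidate in the defining supremum with the Green function of a small coordinate ball). The problem is the forward direction of part 1), which is the entire content of the theorem, and there your proposed mechanism has a genuine gap. You want to assume the local constants $c(w_n)$ degenerate along $w_n\to w_0$, extract a limit, and contradict the strict pole at $w_0$ via upper semicontinuity and Hartogs' lemma. But Hartogs' lemma and the regularized $\limsup$ control plurisubharmonic functions \emph{from above}, whereas the locally uniform pluri-Greenian condition is a \emph{lower} bound on $g_M(z,w)-\ln\Vert z-w\Vert$. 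Moreover, a sequence $g_M(\cdot,w_n)$ whose lower-bound constants drop to $-\infty$ produces no contradiction with the strictness of the pole at $w_0$: these functions have poles at $w_n\neq w_0$, they are not candidates in the supremum defining $g_M(\cdot,w_0)$, and candidates only ever bound $g_M(\cdot,w_0)$ from below, so their degeneration is perfectly consistent with $g_M(\cdot,w_0)$ being well behaved. To refute the degeneration you must \emph{exhibit} a uniformly bounded family of candidates for $g_M(\cdot,x)$ as $x$ ranges over a neighborhood of $w_0$ --- and your argument never produces one.

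That is exactly what the paper's proof constructs. From the strict pole at $w_0$ it manufactures a bounded, semi-proper negative plurisubharmonic function $\sigma$ on $M$ (using connectedness of the sublevel sets of $g_M(\cdot,w_0)$ from \cite{PS}), then glues an affine rescaling $\Phi$ of $\sigma$ to the pluricomplex Green function $g_x$ of a fixed coordinate ball $B(w_0,R)$ with pole at $x$, obtaining for each $x$ near $w_0$ a global negative plurisubharmonic function equal to $g_x+C$ near $w_0$ with $C$ independent of $x$. The gluing only sees $g_x$ through two cutoff levels $r_1<r_2$, and the whole point of the proof is that these can be chosen uniformly in $x$: this is where Lemma 6.2.4 of \cite{K} enters, giving the two-sided comparison $(1+\varepsilon)g_{w_0}\le g_x\le (1+\varepsilon)^{-1}g_{w_0}$ off a small ball for all poles $x$ in a neighborhood. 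Combined with the fact that the coordinate ball itself is locally uniformly pluri-Greenian, this yields $g_M(z,x)\ge g_x(z)+C\ge\ln\Vert z-x\Vert+c+C$ uniformly. Your proposal is missing both the gluing construction and the comparison lemma that supplies the uniformity, and without some substitute for them the contradiction you aim for cannot be reached.
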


\begin{proof}
	Let M be a complex manifold and $w_{0}\in M$. Suppose there exists a
	pluricomplex Green function $g_{M}\left( .,w_{0}\right) $ with a strict
	logarithmic pole at $w_{0}$. That is there exists numbers $A$ and $B$ such
	that%
	\[
	\ln \left\Vert z-w_0\right\Vert +A\leq g_{M}\left( z,w_{0}\right) \leq \ln
	\left\Vert z-w_0\right\Vert +B 
	\]%
	on a coordinate neighborhood $U_{0}$ around $w_{0}$. Choose $R>0$ and $\beta
	<<0$ for the time being, so that%
	\begin{eqnarray*}
		\lbrace z:z\in U_{0},\textrm{ }g_{M}\left( z,w_{0}\right) <\beta \rbrace
		&\subset &B\left( w_{0},e^{\beta -A}\right) \textrm{ and} \\
		B\left( w_{0},e^{\beta -A}\right) &\subset \subset &B\left( w_{0},R\right)
		\subset \subset U_{0}\textrm{ \ }
	\end{eqnarray*}%
	where $B\left( w,r\right) $ denotes the ball around $w\in U_{0}$ with
	radius $r>0$ in $U_{0}$. Since the sub-level sets of $g_{M}\left(
	.,w_{0}\right) $ are connected, 
	\[
	\Omega _{\beta }=\lbrace z:z\in M\textrm{ },\textrm{ }g_{M}\left(
	z,w_{0}\right) <\beta \rbrace \subset\subset B\left( w_{0},R\right) \subset \subset
	U_{0}\textrm{ } 
	\]%
	Choose $R^{-}<<R$ such that $B\left( w_{0},R^{-}\right) \subset \subset $ $%
	\Omega _{\beta }$, and set $\beta _{0}=\sup_{\zeta \in B\left(
		w_{0},R^{-}\right) }g_{M}\left( \zeta ,w_{0}\right)$. Set,%
	\[
	\sigma \left( z\right) =g_{M}\left( z,w_{0}\right) -\beta _{0},\ \ z\in
	M. 
	\]

The plurisubharmonic function $\sigma $ is bounded and semi-proper, since%
\[
\lbrace z:\sigma \left( z\right) <\beta -\beta _{0}\rbrace \subset \subset
B\left( w_{0},R\right) .\textrm{ \ } 
\]

Let $g$ denote the pluricomplex Green function of $B\left( w_{0},R\right) $
with a pole at $w_{0}$. Choose constants $r_{1}$ and $r_{2}$ , $r_{1}<r_{2},$
such that ;%
\begin{eqnarray*}
	\lbrace z\in B\left( w_{0},R\right) :g\left( z\right) <r_{1}\rbrace
	&\subset \subset &B\left( w_{0},R^{-}\right) \\
	B\left( w_{0},e^{\beta -A}\right) &\subset \subset& \lbrace z\in B\left(
	w_{0},R\right) :g\left( z\right) <r_{2}\rbrace \subset \subset B\left(
	w_{0},R\right) .\textrm{ \ \ \ }
\end{eqnarray*}

Set%
\[
\Phi \left( z\right) =\frac{r_{2}-r_{1}}{\beta -\beta _{0}}\sigma \left(
z\right) +r_{1},\textrm{ }z\in M\textrm{ and }\Omega =\lbrace z\in
B\left( w_{0},R\right) :g\left( z\right) <r_{2}\rbrace . 
\]

Define a function $u$ on $\Omega $ , by the formula: 
\begin{equation*}
	u\left( z\right) = \left\{
	\begin{array}{cl}
		\max \left( g,\Phi \right)& \textrm{on }\overline{\lbrace z\in
			B\left( w_{0},R\right) :\textrm{ }g\left( z\right) <r_{1}\rbrace }^{c}\cap
		\Omega \\
		g\left( z\right) &\textrm{on }%
		\lbrace z\in B\left( w_{0},R\right) :g\left( z\right) \leq r_{1}\rbrace.
	\end{array} \right.
\end{equation*}
By our construction $u$ becomes a plurisubharmonic function. Note that
outside $B\left( w_{0},e^{\beta -A}\right) $, $g_{M}\left( z,w_{0}\right)
\geq \beta ,$ so $\sigma \geq \beta -\beta _{0}$ outside $B\left(
w_{0},e^{\beta -A}\right) \cap \Omega$. Hence by setting $u=\Phi $ outside $%
\Omega $, we can extend $u$ to $M$ to a bounded plurisubharmonic function on 
$M$. Note that on $M,$ 
\[
u\left( z\right) \leq \sup_{\zeta \in M}\Phi \left( \zeta \right) \leq 
\frac{r_{2}-r_{1}}{\beta -\beta _{0}}\left( -\beta _{0}\right) +r_{1} 
\]

Setting $u_{M}=\frac{r_{2}-r_{1}}{\beta -\beta _{0}}\left( -\beta
_{0}\right) +r_{1},$ the function $v=u-u_{M}$ becomes a negative
plurisubharmonic function on M that is equal to $g-u_{M}$ on a ball around $%
w_{o}$ where $g$ is the pluricomplex Green function of a ball of radius $R$
with center $w_{0}$ in a coordinate chart around $w_{o}.$

%\bigskip

Note that the dependence of $\Phi $ to the function $g$ is through the
constants $r_{1}$ and $r_{2}$ that satisfy the conditions above. If one could
select $r_{1}$ and $r_{2}$ such that the conditions required by these
numbers are fulfilled by all pluricomplex Green functions of $B\left(
w_{0},R\right) $ with poles that lie in a fixed small ball around $w_{0}$;
one can then use the function $\Phi $ defined above and carry out the above
argument simultaneously for all pluricomplex Green functions with poles in
this neighborhood and thereby obtain a negative plurisubharmonic function on 
$M,$ for each $x$ in this neighborhood, which equals to $g_{x}+C$, for some $%
C $ that does not depend upon $x,$ on a fixed neighborhood $W$ of $w_{0}$,
 where $g_{x}$ denotes the pluricomplex Green function of $B\left(
w_{0},R\right) $ with a pole at $x.$ We claim that such a choice of
constants is possible.

In other words we wish to show that;
$$\exists \, s_{1}\textrm{ and }s_{2}, \quad s_{1}<s_{2}<0\textrm{ and }\delta >0: $$%
$$B\left( w_{0},\delta \right) \subset \lbrace z\in U_{0}:g_{x}\left(
z\right) <s_{1}\rbrace \subset \subset B\left( w_{0},R^{-}\right) \textrm{ and }$$
$$B\left( w_{0},e^{\beta -A}\right) \subset \subset \lbrace z\in B\left(
w_{0},R\right) :g_{x}\left( z\right) <s_{2}\rbrace \subset \subset B\left(
w_{0},R\right),$$
 for every $x$ in a fixed neighborhood of $w_{0}$.

%\bigskip

To this end, choose $r_{1}$ and $r_{2}$ such that the condition is satisfied
for $x=w_{0}$. In view of Lemma 6.2.4 of \cite{K} choose balls around $w_0$, $B$ and $B_{0}$ such that $w_{0}\in B\subset \subset
B_{0}\subset \subset \lbrace z\in B\left( w_{0},R\right)
:g_{w_{0}}\left( z\right) <r_{1}\rbrace ,$%
\[
\left( 1+\varepsilon \right) g_{w_{0}}\left( z\right) \leq g_{x}\left(
z\right) \leq \frac{1}{1+\varepsilon }g_{w_{0}}\left( z\right) ,\ \ \
x\in B,\textrm{ }z\in U_{0}-B_{0},\ \ 
\]

and $\varepsilon >0$ to be chosen later.

On $g_{w_{0}}=r_{1}$, $g_{x}\leq \frac{r_{1}}{1+\varepsilon }$, hence $%
g_{x}\left( z\right) <\frac{r_{1}}{1+\varepsilon }$ , on $\left(
z:g_{w_{0}}<r_{1}\right) $ for all $x\in B$. Consequently we can find a $%
\delta >0,$ such that%
\[
B\left( w_{0},\delta \right) \subset \lbrace z\in
U_{0}:g_{w_{0}}<r_{1}\rbrace \subset \lbrace z\in U_{0}:g_{x}\left(
z\right) <\frac{r_{1}}{1+\varepsilon }\rbrace 
\]

for all $x\in B.$ On the other hand for $z\notin B_{0},$ and $x\in
B,$ one has: 
$$g_{x}\left( z\right) <\frac{r_{1}}{1+\varepsilon }%
\Longrightarrow g_{w_{0}}\left( z\right) <\frac{r_{1}}{\left( 1+\varepsilon
	\right) ^{2}}.$$ 
Choose $\varepsilon >0$ so small that $\lbrace z:g_{w_{0}}\left( z\right) <\frac{r_{1}}{\left(
	1+\varepsilon \right) ^{2}}\rbrace \subset \subset B\left( w_{0},R^{-}\right) .$
It follows that $\lbrace z\in B\left( w_{0},R\right) :g_{x}\left(
z\right) <\frac{r_{1}}{1+\varepsilon }\rbrace \subset \subset B\left(
w_{0},R^{-}\right) $, $\forall\, x\in B.$

Choose a ball $\Delta $ around $w_{0}$ that is contained compactly in $%
\lbrace z\in B\left( w_{0},R\right) :g_{w_{0}}\left( z\right)
<r_{2}\rbrace $ and contains $\overline{B\left( w_{0},e^{\beta -A}\right) }$.

On the boundary of $\Delta ,$ $g_{x}\leq \frac{r_{2}}{1+\varepsilon }$\ \ \ for 
$x\in B$. So; 
\[
B\left( w_{0},e^{\beta -A}\right) \subset \subset \lbrace z\in B\left(
w_{0},R\right) :g_{x}\left( z\right) <\frac{r_{2}}{1+\varepsilon }\rbrace . 
\]

We put another condition on $\varepsilon $, by requiring that 
$$\lbrace z\in B\left( w_{0},R\right) :g_{w_{0}}\left( z\right) <\frac{r_{2}}{\left(
	1+\varepsilon \right) ^{2}}\rbrace \subset \subset B\left( w_{0},R\right) .$$
It now follows that;%
\[
\lbrace z\in B\left( w_{0},R\right) :g_{x}\left( z\right) <\frac{r_{2}}{%
	1+\varepsilon }\rbrace \subset \subset B\left( w_{0},R\right) 
\]

By choosing $\varepsilon >0$ satisfying the requirements stated above and
setting $s_{1}=\frac{r_{1}}{1+\varepsilon },$ $s_{2}=\frac{r_{2}}{1+\varepsilon }, 
$ one proves the claim.

Now since $B\left( w_{0},R\right) $ is locally uniformly pluri-Greenian,
there are open neighborhoods $W$ and $U$ of $w_{0}$, $W\subseteq U,$
contained in $B$ and a constant $c$ such that $g_{x}\left( z\right) \geq
\ln \left\Vert z-x\right\Vert +c$ on $U$ whenever $x\in W$. Combining
this with the analysis above finishes the proof of the first part of the
theorem.

The second part of the theorem follows directly from the first part and Theorem
2.2.
\end{proof}

\section{Concluding Remarks}

\subsection*{\RomanNumeralCaps{1}} We would like to point out that the results  about locally
uniformly pluri-Greenian complex manifolds obtained by E. Poletsky in his
 papers \cite{P paper}, \cite{P yeni} are valid, in view of
Theorem 3.1, for pluri-Greenian complex manifolds. This, in particular
applies to the construction in \cite{P yeni} of \textit{%
	pluri-potential compactifications }for locally uniformly pluri-Greenian
complex manifolds. Poletsky introduced biholomorphically invariant \textit{\
	pluri-potential compactifications }for complex manifolds in \cite{P yeni} as a counterpart to the Martin compactification in the classical
potential theory. Consequently Theorem 3.1 and Theorem 2.1 implies:

\begin{corollary}
	Let $M$ be a Stein manifold and suppose the Fr\'{e}chet space $O\left(
	M\right) $ has the property $\widetilde{\Omega}$. Then $M$ admits a
	pluri-potential compactification in the sense of Poletsky.
\end{corollary}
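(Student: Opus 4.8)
The plan is to chain together results already established in the excerpt rather than to construct anything new. The statement to prove is: if $M$ is a Stein manifold with $\mathcal{O}(M)$ enjoying the property $\widetilde{\Omega}$, then $M$ admits a pluri-potential compactification in the sense of Poletsky. First I would observe that $\widetilde{\Omega}$ for $\mathcal{O}(M)$ is a global linear topological condition, whereas the localized versions $\widetilde{\Omega}_z$ are attached to individual points. The bridge between them was already recorded in the remarks following Definition~2.8: namely, if $\mathcal{O}(M)$ has $\widetilde{\Omega}$, then $M$ has the property $\widetilde{\Omega}_\zeta$ for every $\zeta\in M$. This is essentially immediate, since fixing $k_0$ so that $\mathcal{U}_{k_0}\subseteq \mathcal{U}_{B(\zeta,\varepsilon)}$ for a suitable coordinate ball lets the uniform inclusion in $\widetilde{\Omega}$ specialize to the defining inclusion of $\widetilde{\Omega}_\zeta$.

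Next I would invoke the second part of Theorem~3.1, which states that a Stein manifold $M$ is locally uniformly pluri-Greenian if and only if it has the property $\widetilde{\Omega}_z$ for every $z\in M$. Combining this equivalence with the previous paragraph, I conclude directly that $M$ is locally uniformly pluri-Greenian. The final step is to appeal to Poletsky's construction: in \cite{P yeni} he builds a biholomorphically invariant pluri-potential compactification precisely for locally uniformly pluri-Greenian complex manifolds, so the conclusion that $M$ admits such a compactification follows at once.

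The structure is therefore a short syllogism: $\widetilde{\Omega} \Rightarrow \widetilde{\Omega}_z \text{ for all } z \Rightarrow \text{locally uniformly pluri-Greenian} \Rightarrow \text{pluri-potential compactification exists}$. I do not anticipate any real obstacle here, since each arrow is either a quick specialization argument or a direct citation of a theorem already proved in the excerpt (Theorem~3.1) or established in the literature (Poletsky's compactification). The only point requiring a sentence of care is the first implication, making explicit that the $\widetilde{\Omega}_z$ condition is obtained by choosing the coordinate ball $B(z;\varepsilon)$ whose sup-norm unit ball dominates some local Banach unit ball $\mathcal{U}_{k_0}$, so that the defining quantifier structure of $\widetilde{\Omega}$ collapses onto that of $\widetilde{\Omega}_z$; the remaining implications are quotation. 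Thus the corollary is immediate from Theorem~3.1 and the preceding remarks together with Poletsky's results in \cite{P yeni}.
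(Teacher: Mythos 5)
Your proposal is correct and takes essentially the same route as the paper, which obtains the corollary by combining the remark that $\widetilde{\Omega}$ for $\mathcal{O}(M)$ yields $\widetilde{\Omega}_{\zeta}$ at every point with Theorem 3.1(2) (so $M$ is locally uniformly pluri-Greenian) and then citing Poletsky's construction of pluri-potential compactifications for such manifolds. The extra sentence you add justifying the specialization $\widetilde{\Omega}\Rightarrow\widetilde{\Omega}_{z}$ is consistent with what the paper treats as "plain."
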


\subsection*{\RomanNumeralCaps{2}} \ In \cite{HSTI}, Harz, Shcherbina and Tomassini introduced the notion of a 
\emph{core} of a complex manifold $M$, $c(M)$, as the set of all points $%
w\in M$ where every smooth global plurisubharmonic function that is bounded
from above fails to be strictly plurisubharmonic near $w$.

In the example of Section \ref{section1}, the core of the manifold is
plainly itself.

Poletsky and Shcherbina in \cite{PS} Corollary 3.3 showed that complex
manifolds possess pluricomplex Green functions with strict logarithmic
singularities at points outside their core. Combining this result with
Theorem \ref{thm2} of Section \ref{section2}, we get;

\begin{corollary}
	A Stein manifold $M$ satisfies $\widetilde{\Omega}_\zeta$, for all points $\zeta\in M$ that lie outside its core. 
\end{corollary}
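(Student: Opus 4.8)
The plan is to read the conclusion off the Poletsky--Shcherbina theorem together with Theorem \ref{thm2}. Fix a point $\zeta\in M\setminus c(M)$. By Corollary 3.3 of \cite{PS}, the pluricomplex Green function $g_M(\cdot,\zeta)$ has a strict logarithmic singularity at $\zeta$; in particular $g_M(\cdot,\zeta)\not\equiv-\infty$, so it is a genuine negative plurisubharmonic function on $M$ carrying a strict logarithmic pole at $\zeta$. Thus the only thing standing between us and condition (2) of Theorem \ref{thm2} is semi-properness of this Green function.

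First I would verify semi-properness by re-running the opening argument of the implication \implication{(2)}{(3)} in the proof of Theorem \ref{thm2}. Pick a coordinate ball $B(\zeta,\varepsilon)\subset\subset M$ on which the lower bound $g_M(\cdot,\zeta)\geq\ln\|\cdot-\zeta\|+c_1$ coming from the strict logarithmic pole holds. For $\beta$ sufficiently negative the local sublevel set $\Omega_\beta\cap B(\zeta,\varepsilon)$ is forced inside a smaller ball $B(\zeta,\varepsilon^-)$, since $g_M(z,\zeta)<\beta$ entails $\|z-\zeta\|<e^{\beta-c_1}$. Because the sublevel sets $\Omega_\beta=\{z:g_M(z,\zeta)<\beta\}$ are connected (again by \cite{PS}) and contain $\zeta$, the whole set $\Omega_\beta$ must then lie in the relatively compact ball $B(\zeta,\varepsilon)$. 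Hence $g_M(\cdot,\zeta)$ is semi-proper, and condition (2) of Theorem \ref{thm2} is met at $\zeta$.

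With condition (2) in hand, Theorem \ref{thm2} immediately yields condition (3), namely that $M$ has the property $\widetilde{\Omega}_\zeta$. As $\zeta\in M\setminus c(M)$ was arbitrary, this proves the corollary.

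I expect the only real content beyond citation to be the semi-properness step: Corollary 3.3 of \cite{PS} supplies a strict logarithmic pole but asserts nothing a priori about relative compactness of sublevel sets, so one must upgrade the local confinement near $\zeta$ to a global one. The hinge is the connectedness of the sublevel sets of $g_M(\cdot,\zeta)$, which is exactly what \cite{PS} provides and what the proof of Theorem \ref{thm2} already exploits; once this is invoked the upgrade is routine, and no estimates beyond those appearing in Theorem \ref{thm2} are required.
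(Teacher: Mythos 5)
Your proposal is correct and follows the paper's own route: the paper likewise obtains the corollary by citing Corollary 3.3 of \cite{PS} for a pluricomplex Green function with a strict logarithmic pole at points off the core and then invoking Theorem \ref{thm2}. The semi-properness upgrade you supply via connectedness of the sublevel sets is exactly the argument already carried out at the start of the implication \implication{(2)}{(3)} in the paper's proof of Theorem \ref{thm2}, so it is the same approach, just made explicit.
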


\subsection*{\RomanNumeralCaps{3}} \ Chen and Zhang, in their paper \cite{CZ}, considered Stein manifolds whose pluricomplex Green functions are semi-proper and specified them as manifolds that satisfy the property $(B1)$. They showed, among other things, that Stein manifolds satisfying property $(B1)$ possesses a Bergman metric (\cite{CZ} Theorem 1). This result and Theorem \ref{thm2} of Section \ref{section2} yields:

\begin{corollary}
	A Stein manifold $M$ satisfying $\widetilde{\Omega}_\zeta$ for all its points, possesses a Bergman metric.
\end{corollary}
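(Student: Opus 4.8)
The plan is to read the conclusion off directly from Theorem \ref{thm2} together with the Chen--Zhang result recalled above. The key observation is that the hypothesis ``$\widetilde{\Omega}_\zeta$ for every $\zeta \in M$'', once passed through the equivalence of Theorem \ref{thm2}, is nothing other than the Chen--Zhang property $(B1)$; so the corollary should reduce to a matching of definitions followed by a citation.

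First I would fix an arbitrary $\zeta \in M$ and invoke the implication \implication{(3)}{(2)} of Theorem \ref{thm2}: since $M$ has the property $\widetilde{\Omega}_\zeta$, it possesses a semi-proper pluricomplex Green function with a strict logarithmic pole at $\zeta$; in particular $g_M(\cdot,\zeta)$ is semi-proper. As $\zeta$ was arbitrary, the pluricomplex Green function of $M$ is semi-proper for every pole, which is precisely the defining condition of property $(B1)$. Having identified $(B1)$, I would then apply Theorem 1 of \cite{CZ}, asserting that a Stein manifold satisfying $(B1)$ carries a Bergman metric, to finish.

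The only point that requires care --- and where I expect the (mild) main obstacle to sit --- is the bookkeeping of definitions: one must verify that the notion of semi-properness invoked in Theorem \ref{thm2} (a nonempty relatively compact sublevel set) coincides with the one underlying $(B1)$ in \cite{CZ}, and that the additional strict-logarithmic-pole information furnished by Theorem \ref{thm2} is at worst harmless when feeding the hypothesis into their theorem. Since Theorem \ref{thm2} delivers strictly more than bare semi-properness, no new construction is needed, and the argument closes as soon as the two formulations are aligned.
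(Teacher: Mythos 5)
Your proposal is correct and follows exactly the route the paper intends: Theorem \ref{thm2}, \implication{(3)}{(2)}, applied at every point yields semi-proper pluricomplex Green functions for all poles, which is the Chen--Zhang property $(B1)$, and their Theorem 1 then supplies the Bergman metric. The paper gives no further argument beyond this citation chain, so your definitional bookkeeping caveat is the only content of the proof and you have handled it appropriately.
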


\section*{Acknowledgement}

The author sincerely thanks Proffessor E. Poletsky for his valuable comments.

\bibliographystyle{amsplain}

\begin{thebibliography}{10}

\bibitem{A manus} A. Aytuna, \textit{On Stein Manifolds $M$ for which $\mathcal{O}(M)$ is Isomorphic to $\mathcal{O}(\Delta^n)$ as Fr\'echet Spaces}, Manuscripta Math., \textbf{62(3)} (1988), pp. 297-315.
	
\bibitem{Ay istanbul} A. Aytuna, \textit{Stein Spaces $M$ for which $\mathcal{O}(M)$ is Isomorphic to a Power Series Space}, T. Terzio\u{g}lu (eds) Advances in the Theory of Fréchet Spaces, NATO ASI Series (Series C: Mathematical and Physical Sciences), vol 287. Springer, Dordrecht, 1989, pp 115-154

\bibitem{A D-issue} A. Aytuna, \textit{Stein Manifolds $M$ for which $\mathcal{O}(M)$ has the Property $\widetilde{\Omega}$}, Math. Forum Volume 7, Studies on Mathematical Analysis Vladikavkaz (2013), pp. 45-57.

\bibitem {Ay} A. Aytuna, \textit{Tameness in Fr\'echet Spaces of Analytic Functions},
Studia Math., \textbf{232} (2016).

\bibitem{AKT manuscripta} A. Aytuna, J. Krone, T. Terzio\u{g}lu, \textit{Imbedding of Power Series Spaces and Spaces of Analytic Functions}, Manuscripta Math., \textbf{67} (1990), pp. 125-142.

\bibitem{AS} A. Aytuna, A. Sadullaev, \textit{Parabolic Stein Manifolds}, Math. Scand. \textbf{114} (2014), pp. 86-109.

\bibitem{AT Decomposition} A. Aytuna, T. Terzio\u{g}lu, \textit{Some applications of a decomposition method}, Progress in Functional Analysis (K.D. Bierstedt, J. Bonet, J. Horv\'ath editors), Noth-Holland Mathematics Studies, \textbf{170} (1992), pp. 85-95.

\bibitem{CZ} B.Y. Chen, J.H Zhang, \textit{The Bergman metric on a Stein manifold with a bounded plurisubharmonic function}, Trans. Am. Math. Soc. \textbf{354} (2002), pp. 2997-3009.

\bibitem{Demailly} J.P. Demailly, \textit{Mesures de Monge-Amp\`ere et mesures pluriharmoniques}, Math. Z., \textbf{194} (1987), pp. 519-564.

\bibitem{Frerick et al} L. Demeulenaere, L. Frerick, J. Wengenroth, \textit{Diametral Dimensions of Fr\'echet Spaces}, Studia Math., \textbf{234(3)} (2016), pp. 271-280.

\bibitem{DMV Bulletin France} S. Dineen, R. Meise, D. Vogt, \textit{Characterization of nuclear Fr\'echet spaces in which every bounded set is polar}, Bull. Soc. Math. France \textbf{112} (1984), pp. 41-68.

\bibitem{HSTI} T. Harz, N. Shcherbina, G. Tomassini, \textit{On defining functions and cores for unbounded domains} I, Math. Z. \textbf{286} (2017), pp. 987-1002.

\bibitem{K} M. Klimek, \textit{Pluripotential Theory}, Clarendon Press publications, 1991

\bibitem {MV} R. Meise and D. Vogt, \textit{Introduction to Functional Analysis}, Oxford Graduate Texts in Mathematics, Clarendon Press Publications, 1997.

\bibitem{Pietsch} A. Pietsch, \textit{Nuclear Locally Convex Spaces}, Springer-Verlag, 1972.

\bibitem{Pinkus} A. Pinkus, \textit{$n$-Widths in Approximation Theory}, Springer-Verlag, Berlin, Heidelberg, 1985.

\bibitem{PS} E.A. Poletsky, N. Shcherbina, \textit{Plurisubharmonically seperable complex manifolds}, Proc. Am. Math. Soc. \textbf{147} (2019), pp. 2413-2424.

\bibitem{P paper} E.A. Poletsky, \textit{Pluricomplex Green Functions on Manifolds}, J. Geom. Anal. Vol 30, 2 (2020), pp. 1396-1410.

\bibitem{P yeni} E.Poletsky, \textit{(Pluri)Potential Compactifications}, Potential Anal. vol 53. no1 (2020), pp. 231-245.

\bibitem{Rolewicz} S. Rolewicz, \textit{Metric Linear Spaces}, PWN-Polish Sci. Publ. Warszawa and Reidel Dordrecht, 1985.

\bibitem{T Karadeniz} T. Terzio\u{g}lu, \textit{On Diametral Dimension of some Classes of F-spaces}, J. Karadeniz Univ., \textbf{8} (1985), pp. 1-13.

\bibitem {VB} D. Vogt, \textit{Fr\'echetr\"aume, zwischen denen jede stetige lineare Abbildung beschränkt ist}, J. Reine Angew. Math., \textbf{345} (1983), pp. 182-200.

\end{thebibliography}

\end{document}